\documentclass{article}
\usepackage{amsmath,amsthm,amsfonts}

\allowdisplaybreaks[1]
\newtheorem{thm}{Theorem}[section]
\newtheorem{lem}[thm]{Lemma}
\newtheorem*{cor}{Corollary}

\theoremstyle{definition}

\theoremstyle{remark}
\newtheorem{rem}{Remark}

\newcommand{\ud}{\mathrm{d}}
\numberwithin{equation}{section}  

\begin{document}

\title{A Confirmation of a Conjecture on the Feldman's\\ Two-armed Bandit Problem} 
\author{Zengjing Chen, Yiwei Lin, Jichen Zhang\thanks{Corresponding author: jichenzhang@mail.sdu.edu.cn}}


\maketitle

\begin{abstract}
Myopic strategy is one of the most important strategies when studying bandit problems. In this paper, we consider the two-armed bandit problem proposed by Feldman. With general distributions and utility functions, we obtain a necessary and sufficient condition for the optimality of the myopic strategy. As an application, we could solve Nouiehed and Ross's conjecture for Bernoulli two-armed bandit problems that myopic strategy stochastically maximizes the number of wins.
\end{abstract}

\section{Introduction}%
\label{sec:introduction}

The bandit problem is a well-known problem in sequential control under conditions of incomplete information. It involves sequential selections from several options referred to as arms of the bandit. The payoffs of these arms are characterized by parameters which are typically unknown. Agents should learn from the past information when deciding which arm to select next with an aim to maximize the total payoffs.

This problem can be traced back to Thompson's work \cite{Thompson1933} related to medical trials. Now it is widely studied and frequently applied as a theoretical framework for many other sequential statistical decision problems in market pricing, medical research, and engineering, which characterized by the trade-off between exploration and exploitation (see e.g. \cite{Garbe1998,Kim2016,Rusmevichientong2010}).

Here we focus on the two-armed bandit problem which are studied by Feldman \cite{Feldman1962}. For a given pair $(F_1,F_2)$ of distributions on a probability space $(\Omega,\mathcal{F},P)$, consider two experiments $X$ and $Y$ (called $X$-arm and $Y$-arm), having distributions under two hypotheses $H_1$ and $H_2$ as follows:
\begin{equation}\label{bandit}
	\begin{array}{rccc}
		&~&~X&~Y\\
		(\xi_0)&~H_1:&~F_1&~F_2\\
		(1-\xi_0)&~H_2:&~F_2&~F_1,\\
	\end{array}
\end{equation}
where $\xi_0$ is the priori probability that $H_1$ is true.
In trial $i$, either $X$-arm or $Y$-arm is selected to generate a random variable $X_i$ or $Y_i$ which describes the payoff, and $\xi_i$ is the posterior probability of $H_1$ being true after $i$ trials. The aim is to find the optimal strategy that maximizes the total expected payoffs.

Among the many notable strategies such as myopic strategy, Gittins strategy, and play-the-winner strategy, \textit{myopic strategy} is undoubtedly one of the most appealing strategies. With this strategy, in each trial, agents select the arm with greater immediate expected payoff, i.e., play each time as though there were but one trial remaining. Mathematically, let  $E_P[ \cdot\vert H_1]$ be the expectation functional under hypothesis $H_1,$ if
\begin{equation}\label{eq:classic_condition}
	E_P[X_1\vert H_1]\geq E_P[Y_1\vert H_1],
\end{equation}
which is equivalent to
\begin{equation}
	\int_{\mathbb R} xdF_1(x)\ge \int_{\mathbb R} xdF_2(x),
\end{equation}
then agents select $X$-arm in trial $i$ when $\xi_{i-1}\ge \frac{1}{2}$, or $Y$-arm otherwise.

When the myopic strategy is optimal, it means that the optimal strategy is time invariant, i.e., it does not depend on the number of trials remaining. Hence the optimal strategy can be easily implemented.
Unfortunately, myopic strategy is not optimal in general. This is mainly because at each time myopic strategy only considers the payoff of the next trial; however, to maximize the total payoffs, all the remaining trials should be considered. Kelley \cite{Kelley1974}, Berry and Fristedt \cite{Berry1985} showed  counterexamples that myopic strategy is longer optimal. It is an open question to know under what conditions the myopic strategy is optimal. The optimality of myopic strategy has always attracted people's attention. Thompson \cite{Thompson1933} is the first who
gave a strategy which estimates the probability $P_{\{X\ge Y\}}$ that arm $X$ is better than $Y$ at each stage, and allocates the next trial according to $P_{\{X\ge Y\}}$. This gives the first idea of myopic strategy.Later, Bradt et al. \cite{Bradt1956} considered  Model \eqref{bandit} when $F_1$ and $F_2$ in Model \eqref{bandit} are Bernoulli with expectation $\alpha$ and $\beta$ respectively, they showed that myopic strategy is optimal when $\alpha+\beta=1$. They conjectured that myopic strategy is also optimal when $\alpha+\beta\neq 1$, and verified this conjecture for $n\le 8$. Feldman \cite{Feldman1962} showed a sufficient condition \eqref{eq:classic_condition} for the optimality of myopic strategy in arbitrary number of trials. Kelley \cite{Kelley1974} showed some  necessary conditions  for a Bernoulli two-armed bandit model. Rodman \cite{Rodman1978} extended Feldman's result to a multi-armed setting.

The exploration of myopic strategy under what conditions is the optimal strategy has not come to an end. This problem remains open under more general settings. Nouiehed and Ross \cite{Nouiehed2018} studied the Bernoulli armed bandit problem and posed a conjecture that myopic strategy also maximizes the probability that not less than $k$ wins occur in the first $n$ trials, for all $k,n$. They proved this conjecture for $k=1$ and $k=n$ in $m$-armed bandit, and for $k=n-1$ in two-armed bandit problem.

Why hasn't the question in Nouiehed and Ross's conjecture been raised for almost sixty years? Nouiehed and Ross \cite{Nouiehed2018} explained that this was because in Feldman \cite{Feldman1962} and similar studies, it was the number of times the better arm was chosen that was maximized, not the total payoff. Although the two approaches are equivalent in \cite{Feldman1962}, they are quite different when we want to study a more general utility function.

This opens up a whole new horizon for us to study this issue. All works mentioned above considered the utility function $\varphi (x)=x$ (e.g. \cite{Berry1985,Feldman1962,Kelley1974}) or $\varphi(x)=I_{[k,+\infty)}(x)$ (e.g. \cite{Nouiehed2018}), so a natural question is what conditions can guarantee the optimality of myopic strategy for general utility functions.

In this paper, we focus on the optimal strategy for the most typical case of two-armed bandit problems proposed in the profound paper of Feldman \cite{Feldman1962}. With a general utility function to be considered, we obtain a necessary and sufficient condition for the optimality of myopic strategy. As an application, we could solve Nouiehed and Ross's conjecture for two-armed case.

We consider a situation that the agent playing bandit \eqref{bandit} has a utility function $\varphi$ and starts with an initial fund of $x$ and a strategy $\mathsf{M}^n$ : in trial $i$, play $X$-arm if $\xi_{i-1}\ge \frac{1}{2}$, or $Y$-arm otherwise. The innovative aspects of the obtained results in this paper are as follows: firstly, we take $F_1$ and $F_2$ as general distribution functions, continuous or not, rather than Bernoulli distributions; secondly, we consider general utility functions which are no longer linear. This makes Feldman's proof method invalid and brings some additional difficulties. We shall show that $\mathsf{M}^n$ maximizes the expected utility
 of $n$ trials if and only if the utility function $\varphi$ and the distributions $F_1$ and $F_2$ satisfy
\begin{align}\label{eq-1}
	E_P[\varphi(u+X_1)\vert H_1]\ge E_P[\varphi(u+Y_1)\vert H_1], \text{ for any $u\in\mathbb{R}$.}
\end{align}
Condition \eqref{eq-1} means that no matter how much money the agent already has, if only one trial is to be played, playing the arm with distribution $F_1$ is always better than playing the arm with $F_2$. In the case that $\varphi(x)=x$, Condition \eqref{eq-1} coincides with Condition \eqref{eq:classic_condition}.

It is interesting that if we choose the utility function in Condition \eqref{eq-1}  as an indictor function $\varphi(x)=I_{[k,+\infty)}(x)$, and initial fund $u=0$, we could prove the Nouiehed and Ross's conjecture for two-armed case immediately.

The structure of the paper is as follows. In Section \ref{sec:notations}, we describe the two-armed bandit problem and some basic properties. In Section \ref{sec:main_results}, we first introduce a dynamic programming property of the optimal expected utility, and then we prove the main result, as a corollary, we derive the validity of  Nouiehed and Ross's conjecture in the two-armed bandit case.

\section{Preliminaries}%
\label{sec:notations}

Let us start with the description of the two-armed bandit model and the strategies.

Consider the bandit model in Equation \eqref{bandit}. Let $\{X_i\}_{i\geq1}$ be a sequence of random variables where $X_i$ describes the payoff of trial $i$ from $X$-arm, and $\{Y_i\}_{i\geq1}$ be a sequence of random variables selected from $Y$-arm. $\{(X_i,Y_i)\}_{i\ge 1}$ are independent under each hypothesis. We define $\mathcal{F}_i:=\sigma\{(X_1,Y_1),\cdots,(X_{i},Y_i)\}$, which represents all information obtained until trial $i$.

\emph{We call this model a $(\xi_0,n,x)$-bandit}, if there are $n$ trials to be played  with initial fund $x$ and a prior probability $\xi_0$. In our following discussion, the distributions $F_1$ and $F_2$ of arms are continuous with density $f_1$ and $f_2$, respectively.

\begin{rem}
	The same results still hold, when the distributions of arms are discrete, e.g. Bernoulli. We only need to modify the calculation of expectations in this case.
\end{rem}

For each $i\geq1$, let $\theta_i$ be an $\mathcal{F}_{i-1}$-measurable random variable taking values in $\{0,1\}$, where $\theta_i=1$ means $X$-arm is selected for observation in trial $i$ and $\theta_i=0$ means $Y$-arm is selected for observation in trial $i$.
Let $\Theta_n$ be the set of strategies $\theta=\{\theta_1,\cdots,\theta_n\}$ for $n$-trial bandits. The payoff that an agent receives with strategy $\theta$ in trial $i$ is
\begin{align}
	Z_{i}^{\theta}:=\theta_{i}X_{i}+(1-\theta_{i})Y_{i},~1\leq i\leq n.
\end{align}

For a $(\xi_0,n,x)$-bandit and a suitable measurable function $\varphi$, the \emph{expected utility} obtained by using strategy $\theta$ is denoted by
\begin{align}\label{eq:w_defn_old}
	W(\xi_0,n,x,\theta)=E_P\biggl[\varphi\biggl(x+\sum_{i=1}^{n} Z_i^\theta\biggr)\biggr],
\end{align}
where $\varphi$ is called a utility function.

For each strategy $\theta\in\Theta_n$, let $\{\xi_i^\theta\}_{i\ge1}$ be  the sequence of the posterior probabilities that hypothesis $H_1$ is true after $i$ trials. The posterior probability $\xi^\theta_1$ after trial $1$ with a payoff $s$ is calculated by
\begin{equation}\label{eq-3}
	\xi_1^\theta(s)=\left\{\begin{aligned}
		&\frac{\xi_0f_1(s) }{\xi_0 f_1(s)+(1-\xi_0)f_2(s)},&&\text{if $\theta_1=1$, i.e. $X$-arm is selected,}\\
		&\frac{\xi_0f_2(s) }{\xi_0 f_2(s)+(1-\xi_0)f_1(s)},&&\text{if $\theta_1=0$, i.e. $Y$-arm is selected.}
	\end{aligned}\right.
\end{equation}
We can easily obtain that for any fixed $s$, $\xi_1^\theta(s)$ is increasing in $\xi_0$.
When the posterior probability $\xi_i^\theta$ is known and the payoff of the  $i+1$ trial is $s$, there is a recursive formula 
\begin{equation}\label{posterior}
	\xi_{i+1}^\theta(s)=\left\{\begin{aligned}
		&\frac{\xi_i^\theta f_1(s) }{\xi_i^\theta  f_1(s)+(1-\xi_i^\theta)f_2(s)},&&\text{if $\theta_{i+1}=1$, i.e. $X$-arm is selected,}\\
		&\frac{\xi_i^\theta f_2(s) }{\xi_i^\theta f_2(s)+(1-\xi_i^\theta)f_1(s)},&&\text{if $\theta_{i+1}=0$, i.e. $Y$-arm is selected.}
	\end{aligned}\right.
\end{equation}

Now, we propose the following two-armed bandit problem:

\noindent{\bf Problem (TAB).} For a $(\xi_0,n,x)$-bandit and a utility function $\varphi$, find some strategy in $\Theta_n$ to achieve the maximal expected utility
\begin{equation}\label{eq-2}
	V(\xi_0,n,x):=\sup_{\theta\in \Theta_n} W(\xi_0,n,x,\theta)=\sup_{\theta\in\Theta_n} E_P\biggl[\varphi\biggl(x+\sum_{i=1}^{n} Z_i^\theta\biggr)\biggr].
\end{equation}

Note that the expected utility $E_P[\cdot]$ depends on hypothesis $H_1$ , $H_2$ and $\xi_0$. In fact,
\begin{equation}
\begin{aligned}
	&E_{P} \biggl[\varphi\biggl(x+\sum_{i=1}^{n} Z_i^\theta\biggr)\biggr]\\
	=&\xi_0 E_P\biggl[\varphi\biggl(x+\sum_{i=1}^{n} Z_i^\theta\biggr)\vert H_1\biggr]+(1-\xi_0)E_P\biggl[\varphi\biggl(x+\sum_{i=1}^{n} Z_i^\theta\biggr)\vert H_2\biggr],
\end{aligned}
\end{equation}
where $E_P[\cdot\vert H_i]$ is the expectation under hypothesis $H_i$ $(i=1,2).$

To simplify the notation, we write $E_P[\cdot\vert H_1]$ shortly as $E_1[\cdot]$, $E_P[\cdot\vert H_2]$ as $E_2[\cdot]$ and
$E_P[\cdot]$ as $E_{\xi_0}[\cdot]$. Then the expected utility can be written as
\begin{align}\label{eq:W_defn}
	W(\xi_0,n,x,\theta)=E_{\xi_0}\biggl[\varphi\biggl(x+\sum_{i=1}^{n} Z_i^\theta\biggr)\biggr],
\end{align}
where
\begin{equation}\label{E_xi}
		E_{\xi_0} \biggl[\varphi\biggl(x+\sum_{i=1}^{n} Z_i^\theta\biggr)\biggr]
		=\xi_0 E_1\biggl[\varphi\biggl(x+\sum_{i=1}^{n} Z_i^\theta\biggr)\biggr]+(1-\xi_0)E_2\biggl[\varphi\biggl(x+\sum_{i=1}^{n} Z_i^\theta\biggr)\biggr].
\end{equation}
Immediately, equality \eqref{eq-2} can be written as follows:
\begin{align}\label{eq:V_defn}
	V(\xi_0,n,x)=\sup_{\theta\in \Theta_n} W(\xi_0,n,x,\theta)=\sup_{\theta\in \Theta_n} E_{\xi_0}\biggl[\varphi\biggl(x+\sum_{i=1}^{n} Z_i^\theta\biggr)\biggr].
\end{align}

Consider a strategy  $\mathsf{M}^n$ : in trial $i$, play $X$-arm if $\xi_{i-1}\ge \frac{1}{2}$, or $Y$-arm otherwise. Our main result is to find conditions under which  $\mathsf{M}^n$ could solve Problem (TAB).

The following lemma shows that when calculating the expected utility, we can temporally fix the payoff of the first trial, calculate the expected utility as a function of the first payoff, and then take expectation while seeing the first payoff as a random variable. This is an extension of equation (2) in Feldman \cite{Feldman1962}.

\begin{lem}\label{lem:Exp}
	For each integer $n\ge 2$ and strategy $\theta=\{\theta_1,\cdots,\theta_n\}\in\Theta_n $, we have
	\begin{align}\label{eq:Exp_eq1}
		E_{\xi_0} \biggl[\varphi\biggl(x+\sum_{i=1}^{n} Z_i^\theta\biggr)\biggr]
		=E_{\xi_0}\left[h\bigl(x,Z_1^\theta\bigr)\right], \forall x\in \mathbb R,
	\end{align}
	where $h(x,u)=E_{\xi_1^\theta(u)}\left[\varphi\Bigl(x+u+\sum_{i=2}^{n} Z_i^{\theta[u]}\Bigr)\right],$
	$\xi_{1}^\theta$ is defined by \eqref{eq-3} and \mbox{$\theta[u]$} is the strategy obtained from $\theta$ by fixing the payoff of the first trial to be $u$.
\end{lem}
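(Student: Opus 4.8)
The plan is to condition on the outcome of the first trial and to identify the ensuing Bayesian update with the posterior $\xi_1^\theta$. Since $\theta_1$ uses no information from earlier trials it is a deterministic constant, so there are only two cases; I would carry out the case $\theta_1=1$, for which $Z_1^\theta=X_1$ has density $f_1$ under $H_1$ and density $f_2$ under $H_2$, the case $\theta_1=0$ being identical after interchanging $f_1$ and $f_2$ (equivalently, using the second line of \eqref{eq-3}).

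First I would expand $E_{\xi_0}$ via \eqref{E_xi} and, separately under each hypothesis $H_j$, condition on the value $s$ of the first payoff. Because $\{(X_i,Y_i)\}_{i\ge1}$ are independent under each hypothesis and the continuation strategy depends only on the payoff observed in trial $1$ (so that $\theta[s]$ is well defined), Fubini's theorem gives
\[
E_j\biggl[\varphi\biggl(x+\sum_{i=1}^{n}Z_i^\theta\biggr)\biggr]=\int_{\mathbb R}f_j(s)\,g_j(x,s)\,\ud s,\qquad j=1,2,
\]
where $g_j(x,s):=E_j\bigl[\varphi\bigl(x+s+\sum_{i=2}^{n}Z_i^{\theta[s]}\bigr)\bigr]$ is the conditional expected utility of the remaining $n-1$ trials under $H_j$ with the first payoff fixed at $s$. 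Multiplying by $\xi_0$ and $1-\xi_0$ and summing, the first-trial density factor becomes $\xi_0 f_1(s)+(1-\xi_0)f_2(s)$, which is precisely the mixed density of $Z_1^\theta=X_1$ under $E_{\xi_0}$.

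The decisive step is the algebraic regrouping
\[
\xi_0 f_1(s)\,g_1(x,s)+(1-\xi_0)f_2(s)\,g_2(x,s)=\bigl(\xi_0 f_1(s)+(1-\xi_0)f_2(s)\bigr)\Bigl(\xi_1^\theta(s)\,g_1(x,s)+\bigl(1-\xi_1^\theta(s)\bigr)g_2(x,s)\Bigr),
\]
which is nothing more than the defining formula \eqref{eq-3} for $\xi_1^\theta(s)$ (with $\theta_1=1$) rewritten. By the convention \eqref{E_xi}, the second factor on the right equals $E_{\xi_1^\theta(s)}\bigl[\varphi\bigl(x+s+\sum_{i=2}^{n}Z_i^{\theta[s]}\bigr)\bigr]=h(x,s)$. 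Integrating in $s$ and noting that $\int_{\mathbb R}\bigl(\xi_0 f_1(s)+(1-\xi_0)f_2(s)\bigr)h(x,s)\,\ud s=E_{\xi_0}\bigl[h(x,Z_1^\theta)\bigr]$ — again because $Z_1^\theta=X_1$ — yields \eqref{eq:Exp_eq1}.

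I expect the main obstacle to be conceptual rather than computational: one must argue carefully that after fixing the first payoff at $s$ the continuation is genuinely the expected utility of an $(n-1)$-trial bandit with prior $\xi_1^\theta(s)$, i.e. that the Bayes update in \eqref{eq-3} is consistent with the conditional law of $(X_i,Y_i)_{i\ge2}$ given the first observation and that $\theta[s]$ is an admissible $(n-1)$-trial strategy. The independence assumption on $\{(X_i,Y_i)\}_{i\ge1}$ is what makes both the conditioning and Fubini's theorem clean; once the continuation is correctly identified, everything collapses to the one-line identity above linking the Bayes weights to the mixture $E_{\xi_1^\theta(s)}$.
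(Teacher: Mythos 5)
Your proposal is correct and follows essentially the same route as the paper's proof: condition on the first payoff under each hypothesis (the paper does this by writing out the iterated integral with the measurable functions $\pi_i$ explicitly, where you invoke independence and Fubini), then use the identity $\xi_0 f_1(s)=\xi_1^\theta(s)\bigl(\xi_0 f_1(s)+(1-\xi_0)f_2(s)\bigr)$ and its companion for $(1-\xi_0)f_2(s)$ to recombine the two hypothesis terms into $E_{\xi_1^\theta(s)}[\cdot]=h(x,s)$, and finally integrate against the mixture density of $Z_1^\theta$. No gap; the step you flag as needing care (that the continuation with first payoff fixed at $s$ is a legitimate $(n-1)$-trial expected utility under the updated prior) is exactly what the paper's explicit factorization of the product density establishes.
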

\begin{rem}
	$E_{\xi_1^\theta(u)}[\cdot]$ is the expected utility $E_{\xi_0}[\cdot]$ replacing $\xi_0$ with $\xi_1^\theta(u)$. In integral form, Equation \eqref{eq:Exp_eq1}  is
	\begin{equation}
	\begin{aligned}
		&E_{\xi_0} \biggl[\varphi\biggl(x+\sum_{i=1}^{n} Z_i^\theta\biggr)\biggr]\\
		=&\int_{\mathbb R} E_{\xi_1^\theta(u)}\biggl[\varphi\biggl(x+u+\sum_{i=2}^{n} Z_i^{\theta[u]}\biggr)\biggr]\left(\xi_0f_{1}(u)+(1-\xi_0)f_{2}(u)\right)\ud u.
	\end{aligned}
	\end{equation}
\end{rem}
\begin{rem}
	Here $h\left(x,Z^\theta_1\right)$ can be seen as a conditional expectation of $\varphi\!\left(x+\sum_{i=1}^{n} Z_i^\theta\right)$ given $\mathcal{F}_1$, and this lemma shows that it has the same expectation as $\varphi\left(x+\sum_{i=1}^{n} Z_i^\theta\right)$.
\end{rem}
\begin{proof}
	Fix some strategy $\theta=\{\theta_1,\cdots,\theta_n\}\in\Theta_n.$ In order to prove \eqref{eq:Exp_eq1}, without loss of generality, we assume that $\theta_1=1$, then $Z_1^{\theta}=X_1.$ Note that $\theta_i$ is $\mathcal{F}_{i-1}$-measurable, and $\theta_i$ is the function of $(X_1,X_2,Y_2,\cdots,X_{i-1},Y_{i-1})$ for $i\geq1,$ that is,   there exist measurable functions $\{\pi_i\}_{i\geq2}$ such that
	\begin{align}
	\theta_2=\pi_2(X_1),\ \ \theta_i=\pi_i(X_1,X_2,Y_2,\cdots,X_{i-1},Y_{i-1}),~i\geq3.
	\end{align}
	
	Note that $\{X_1,Y_1,\cdots,X_{n},Y_{n}\}$ are independent under  hypothesis $H_1$. Applying \eqref{eq-3}, we have
	\begin{equation}\label{eq:Rx_1}
		\begin{split}
			&\xi_0E_1\biggl[\varphi\biggl(x+\sum_{i=1}^{n} Z_i^\theta\biggr)\biggr]\\
			=&\xi_0E_1\biggl[\varphi\biggl(x+X_1+\sum_{i=2}^{n}\big[\theta_iX_i+(1-\theta_i)Y_i\big] \biggr)\biggr]\\
			=&\int_{\mathbb R}\cdots\int_{\mathbb R} \xi_0 \varphi\biggl(x+u_1+
			\big[\pi_2(u_1)u_2+(1-\pi_2(u_1))s_2\big]\\
			+&\sum_{i=3}^{n}\big[\pi_i(u_1,\cdots,s_{i-1})u_i+(1-\pi_i(u_1,\cdots,s_{i-1}))s_i\big]\biggr)
			f_1(u_1)\ud u_1\prod^{n}_{i=2}f_{1}(u_i)f_2(s_i) \ud u_i \ud s_i\\	
			=&\int_{\mathbb R} E_1\biggl[\varphi\biggl(x+u_1+\sum_{i=2}^{n} Z_i^{\theta[u_1]}\biggr)\biggr]\xi_0 f_{1}(u_1)\ud u_1\\	
			=&\int_{\mathbb R} \xi_1^\theta(u_1) E_1\biggl[\varphi\biggl(x+u_1+\sum_{i=2}^{n} Z_i^{\theta[u_1]}\biggr)\biggr]\left(\xi_0f_{1}(u_1)+ (1-\xi_0)f_{2}(u_1)\right)\ud u_1.
		\end{split}
	\end{equation}
	Similar arguments show that
	\begin{equation}\label{eq:Rx_2}
		\begin{split}
			&(1-\xi_0)E_2\biggl[\varphi\biggl(x+\sum_{i=1}^{n} Z_i^\theta\biggr)\biggr]\\
			=&\int_{\mathbb R} \left(1-\xi_1^\theta(u_1)\right) E_2\biggl[\varphi\biggl(x+u_1+\sum_{i=2}^{n} Z_i^{\theta[u_1]}\biggr)\biggr]\left(\xi_0f_{1}(u_1)+(1-\xi_0)f_{2}(u_1)\right)\ud u_1.
		\end{split}
	\end{equation}
	Combining  \eqref{E_xi}, \eqref{eq:Rx_1} and \eqref{eq:Rx_2}, we find that
	\begin{equation}
		\begin{aligned}
		&E_{\xi_0}\biggl[\varphi\biggl(x+\sum_{i=1}^{n} Z_i^\theta\biggr)\biggr]\\
			=&\int_{\mathbb R} \xi_1^\theta(u_1)E_1\biggl[\varphi\biggl(x+u_1+\sum_{i=2}^{n} Z_i^{\theta[u_1]}\biggr)\biggr] \left(\xi_0f_{1}(u_1)+ (1-\xi_0)f_{2}(u_1)\right)\ud u_1\\
			 &+\int_{\mathbb R} (1-\xi_1^\theta(u_1))E_2\biggl[\varphi\biggl(x+u_1+\sum_{i=2}^{n} Z_i^{\theta[u_1]}\biggr)\biggr] \left(\xi_0f_{1}(u_1)+(1-\xi_0)f_{2}(u_1)\right)\ud u_1\\
			=&\int_{\mathbb R} E_{\xi_1^\theta(u_1)}\biggl[\varphi\biggl(x+u_1+\sum_{i=2}^{n} Z_i^{\theta[u_1]}\biggr)\biggr]\left(\xi_0f_{1}(u_1)+(1-\xi_0)f_{2}(u_1)\right)\ud u_1\\
		=&\xi_0\int_{\mathbb R} h(x,u_1)f_{1}(u_1)\ud u_1+ (1-\xi_0)\int_{\mathbb R} h(x,u_1)f_{2}(u_1)\ud u_1\\
		=& \xi_0E_1[h(x,X_1)]
		+(1-\xi_0)E_2[h(x,X_1)]\\
		=&E_{\xi_0}[h(x,X_1)].
	\end{aligned}
	\end{equation}
	This completes the proof.
\end{proof}

We can see that the form of $h(x,u)$ is very similar to the expected utility of the $(\xi_1^\theta(u),n-1,x+u)$-bandit with some strategy. Actually, there is indeed such a strategy to make the value of $h(x,u)$ equal to the expected utility of the $(\xi_1^\theta(u),n-1,x+u)$-bandit.
\begin{lem}
	\label{lem:NtoN_1}
	For any strategy $\theta\in \Theta_n$, let $h(x,u)=E_{\xi_1^\theta(u)}\left[\varphi\Bigl(x+u+\sum_{i=2}^{n} Z_i^{\theta[u]}\Bigr)\right]$, then for any $u$, there exists a strategy $\rho\in \Theta_{n-1}$, such that the value of $h(x,u)$ equals to the expected utility of the $(\xi_1^\theta(u),n-1,x+u)$-bandit with strategy $\rho$, i.e.
	\begin{align}
		h(x,u)=E_{\xi_1^\theta(u)}\left[\varphi\Bigl(x+u+\sum_{i=2}^{n} Z_i^{\theta[u]}\Bigr)\right]=E_{\xi_1^\theta(u)}\left[\varphi\Bigl(x+u+\sum_{i=1}^{n-1} Z_i^{\rho}\Bigr)\right].
	\end{align}
\end{lem}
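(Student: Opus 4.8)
The plan is to realise $h(x,u)$ directly as the expected utility of the $(\xi_1^\theta(u),n-1,x+u)$-bandit, by a reindexing of the trials. Assume without loss of generality that $\theta_1=1$, so that fixing the first payoff to $u$ amounts to replacing $X_1$ by the constant $u$ (the case $\theta_1=0$ is identical, with $Y_1$ in place of $X_1$ and the second branch of \eqref{eq-3} defining $\xi_1^\theta$). As in the proof of Lemma \ref{lem:Exp}, there are measurable functions $\pi_i$ with $\theta_2=\pi_2(X_1)$ and $\theta_i=\pi_i(X_1,X_2,Y_2,\dots,X_{i-1},Y_{i-1})$ for $i\ge 3$; hence $\theta_i[u]$ depends only on $(X_2,Y_2,\dots,X_{i-1},Y_{i-1})$.

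Let $\{(X_j',Y_j')\}_{j\ge1}$ be the payoff sequence of the $(\xi_1^\theta(u),n-1,x+u)$-bandit, which under each hypothesis $H_k$ ($k=1,2$) is i.i.d.\ with the same joint law as $\{(X_i,Y_i)\}_{i\ge2}$. Relabelling $(X_{i+1},Y_{i+1})\leftrightarrow(X_i',Y_i')$ for $i\ge1$, define, for $j=1,\dots,n-1$,
\begin{equation}
	\rho_j:=\pi_{j+1}\bigl(u,X_1',Y_1',\dots,X_{j-1}',Y_{j-1}'\bigr),
\end{equation}
so that $\rho_j$ is $\sigma\{(X_1',Y_1'),\dots,(X_{j-1}',Y_{j-1}')\}$-measurable and $\rho=\{\rho_1,\dots,\rho_{n-1}\}\in\Theta_{n-1}$ is a legitimate strategy for the smaller bandit. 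By construction, $Z_{j+1}^{\theta[u]}=\theta_{j+1}[u]X_{j+1}+(1-\theta_{j+1}[u])Y_{j+1}$ is obtained from $(X_2,Y_2,\dots,X_{j+1},Y_{j+1})$ by exactly the same measurable map that produces $Z_j^\rho=\rho_jX_j'+(1-\rho_j)Y_j'$ from $(X_1',Y_1',\dots,X_j',Y_j')$.

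The key step is then to conclude that, under each $H_k$, the vectors $(Z_2^{\theta[u]},\dots,Z_n^{\theta[u]})$ and $(Z_1^\rho,\dots,Z_{n-1}^\rho)$ have the same law, since the two input vectors above agree in law under $H_k$ by the independence of $\{(X_i,Y_i)\}_{i\ge1}$ and the fact that the smaller bandit uses the same pair $(F_1,F_2)$ of arm distributions. Therefore
\begin{equation}
	E_k\Bigl[\varphi\bigl(x+u+\textstyle\sum_{i=2}^n Z_i^{\theta[u]}\bigr)\Bigr]=E_k\Bigl[\varphi\bigl(x+u+\textstyle\sum_{i=1}^{n-1} Z_i^{\rho}\bigr)\Bigr],\qquad k=1,2,
\end{equation}
and forming the convex combination with weights $\xi_1^\theta(u)$ and $1-\xi_1^\theta(u)$ — which is exactly the meaning of $E_{\xi_1^\theta(u)}[\cdot]$ both in the definition of $h(x,u)$ and in the $(\xi_1^\theta(u),n-1,x+u)$-bandit — yields the asserted identity. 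The only point requiring care is this measure-theoretic bookkeeping: that $\rho$ is adapted to the new bandit's filtration (immediate from the dependence structure of $\pi_{j+1}$) and that the identification of joint laws under each hypothesis is legitimate; everything else is a direct substitution.
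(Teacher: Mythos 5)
Your proposal is correct and follows essentially the same route as the paper: define $\rho_j$ by shifting the index of the measurable functions $\pi_{j+1}$ that represent $\theta[u]$ (with the first payoff frozen at $u$), feed them the smaller bandit's payoff sequence, and conclude equality of expectations from equality in law under each hypothesis, then combine with weights $\xi_1^\theta(u)$ and $1-\xi_1^\theta(u)$. Your version is if anything slightly more explicit than the paper's (introducing the primed payoff sequence and noting that the \emph{joint} law of $(Z_2^{\theta[u]},\dots,Z_n^{\theta[u]})$ matches that of $(Z_1^{\rho},\dots,Z_{n-1}^{\rho})$, rather than only trialwise distributions), but the underlying argument is the same.
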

\begin{proof}
	We know that $\theta[u]$ is obtained by fixing the payoff $u$ of the first trial, hence for any $\theta'_i\in \theta[u]$, it is $\sigma(X_2,Y_2,\dots,X_{i-1},Y_{i-1})$-measurable. Then there are measurable functions $\pi_i$, $i\ge 2$, such that
	\begin{align}
		\theta'_i=\pi_i(X_2,Y_2,\dots,X_{i-1},Y_{i-1}), i\ge 2.
	\end{align}
	Define a new strategy $\rho\in\Theta_{n-1}$ and let
	\begin{align}
		\rho_i=\pi_{i+1}(X_1,Y_1,\dots,X_{i-1},Y_{i-1}), 1\le i\le n-1.
	\end{align}
	By the definition of $\rho$, we know that $\rho_i$ has the same distribution with $\theta'_{i+1}$ in both hypotheses, so $Z_i^{\theta[u]}$ and $Z_i^{\rho}$ have the same distribution. Using this fact, we can easily varify that
	\begin{align}
		E_{\xi_1^\theta(u)}\left[\varphi\Bigl(x+u+\sum_{i=2}^{n} Z_i^{\theta[u]}\Bigr)\right]=E_{\xi_1^\theta(u)}\left[\varphi\Bigl(x+u+\sum_{i=1}^{n-1} Z_i^{\rho}\Bigr)\right].
	\end{align}
\end{proof}


\section{Main results}%
\label{sec:main_results}

In this section, we first investigate the dynamic programming property of the expected utility, which plays an important role in the subsequent arguments.
Similar results are found in many literatures on bandit problems, but only for the case of $\varphi(x)=x$ (e.g. \cite{Berry1985, Feldman1962}). Our result extends the classic ones.

\begin{thm}
	\label{thm:R*}
	For each $\xi_0\in [0,1]$, $n\geq1$ and $x\in\mathbb{R}$, consider the $(\xi_0,n,x)$-bandit. The optimal strategy $\theta^{[n]}\in \Theta_n$ exists. And there are measurable functions 
	\begin{align*}\pi^{[n]}_i:[0,1]\times\mathbb{R}\times\mathbb{R}^{2i-2}\mapsto \{0,1\},\quad 1\le i\le n, \end{align*}
	such that for any $\xi_0\in [0,1]$ and $x\in\mathbb{R}$, the optimal strategy $\theta^{[n]}$ for the $(\xi_0,n,x)$-bandit satisfies
	\begin{equation}\label{eq:OptSt}
		\begin{aligned}
			\theta^{[n]}_1&=\pi^{[n]}_1(\xi_0,x),\\
			\theta^{[n]}_i&=\pi^{[n]}_i(\xi_0,x,X_1,Y_1,\dots,X_{i-1},Y_{i-1}),~i\geq2.
		\end{aligned}
	\end{equation}
	And the optimal expected utility satisfies the following dynamic programming property
	\begin{equation}\label{eq:DyProg}
		\begin{aligned}
			V(\xi_0,n,x)&= \sup_{\theta\in \Theta_n} E_{\xi_0}\left[V\left(\xi_1^\theta,n-1,x+Z^\theta_1\right)\right]\\
			&= \max \left\{ E_{\xi_0}\left[V\biggl(\frac{\xi_0f_1(X_1) }{\xi_0 f_1(X_1)+(1-\xi_0)f_2(X_1)},n-1,x+X_1\biggr)\right],\right.\\
			&~~~~~~~~~~~\left.E_{\xi_0}\left[V\biggl(\frac{\xi_0f_2(Y_1) }{\xi_0 f_2(Y_1)+(1-\xi_0)f_1(Y_1)},n-1,x+Y_1\biggr)\right] \right\}.
		\end{aligned}
	\end{equation}
\end{thm}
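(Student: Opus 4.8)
The plan is to argue by induction on the number of trials $n$, using Lemma~\ref{lem:Exp} and Lemma~\ref{lem:NtoN_1} to reduce an $n$-trial bandit to the family of $(n-1)$-trial bandits indexed by the first payoff. Throughout I assume $\varphi$ is such that all the expectations written below are finite. For the base case $n=1$, a strategy $\theta\in\Theta_1$ is a single $\mathcal{F}_0$-measurable, hence constant, choice $\theta_1\in\{0,1\}$, so $W(\xi_0,1,x,\theta)$ equals $E_{\xi_0}[\varphi(x+X_1)]$ or $E_{\xi_0}[\varphi(x+Y_1)]$; thus $V(\xi_0,1,x)$ is the larger of these two numbers and one may take $\pi^{[1]}_1(\xi_0,x)=1$ precisely when $E_{\xi_0}[\varphi(x+X_1)]\ge E_{\xi_0}[\varphi(x+Y_1)]$. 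By the integral representation \eqref{E_xi} both quantities are measurable in $(\xi_0,x)$, so $\pi^{[1]}_1$ is measurable and the associated constant strategy is optimal. It is convenient to strengthen the induction hypothesis by also recording that $(\xi,x)\mapsto V(\xi,n,x)$ is measurable (it is in fact continuous in $\xi$ whenever $\varphi$ is suitably dominated); this is needed so that a random variable such as $V(\xi_1^\theta,n-1,x+Z_1^\theta)$ is well defined.

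For the inductive step, assume the conclusion, together with measurability of $V(\cdot,n-1,\cdot)$, for $n-1$, and fix a $(\xi_0,n,x)$-bandit; write $a(\xi_0,x)$ and $b(\xi_0,x)$ for the first and second expectations on the right-hand side of \eqref{eq:DyProg}, which are measurable in $(\xi_0,x)$ by the inductive measurability of $V(\cdot,n-1,\cdot)$. Let $\theta\in\Theta_n$ be arbitrary; again $\theta_1$ is constant. If $\theta_1=1$ then $Z_1^\theta=X_1$, and Lemma~\ref{lem:Exp} gives $W(\xi_0,n,x,\theta)=E_{\xi_0}[h(x,X_1)]$ with $h(x,u)=E_{\xi_1^\theta(u)}[\varphi(x+u+\sum_{i=2}^{n}Z_i^{\theta[u]})]$; by Lemma~\ref{lem:NtoN_1}, $h(x,u)$ is the expected utility of a $(\xi_1^\theta(u),n-1,x+u)$-bandit under some $\rho\in\Theta_{n-1}$, hence $h(x,u)\le V(\xi_1^\theta(u),n-1,x+u)$. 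Taking $E_{\xi_0}[\cdot]$ and using \eqref{eq-3} for $\xi_1^\theta$ gives $W(\xi_0,n,x,\theta)\le a(\xi_0,x)$; the case $\theta_1=0$ is symmetric with $Y_1$ in place of $X_1$ and $f_1,f_2$ swapped, yielding $W(\xi_0,n,x,\theta)\le b(\xi_0,x)$. Since these bounds depend on $\theta$ only through $\theta_1\in\{0,1\}$, taking the supremum over $\theta$ yields $V(\xi_0,n,x)\le\max\{a(\xi_0,x),b(\xi_0,x)\}$, which is also the inequality $V(\xi_0,n,x)\le\sup_{\theta\in\Theta_n}E_{\xi_0}[V(\xi_1^\theta,n-1,x+Z_1^\theta)]$ (that supremum reducing to the same maximum).

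To attain the bound, set $\pi^{[n]}_1(\xi_0,x)=1$ if $a(\xi_0,x)\ge b(\xi_0,x)$ and $0$ otherwise, and define $\theta^{[n]}$ by playing the arm prescribed by $\pi^{[n]}_1$ in trial $1$, and then, given the first payoff $u$, following the optimal $(\xi_1(u),n-1,x+u)$-bandit strategy $\rho^{[n-1]}$ supplied by the induction hypothesis, with the observation variables relabelled $(X_2,Y_2,\dots)\mapsto(X_1,Y_1,\dots)$. Since $\rho^{[n-1]}_j=\pi^{[n-1]}_j(\xi_1(u),x+u,\cdot)$ and $\xi_1(u),x+u$ are measurable functions of $u$, each $\theta^{[n]}_i$ is of the form \eqref{eq:OptSt} for a suitable measurable $\pi^{[n]}_i$; moreover $Z_{i+1}^{\theta^{[n]}[u]}$ and $Z_i^{\rho^{[n-1]}}$ have equal distributions, so in Lemma~\ref{lem:NtoN_1} applied to $\theta^{[n]}$ one gets $h(x,u)=V(\xi_1(u),n-1,x+u)$ exactly. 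Hence Lemma~\ref{lem:Exp} gives $W(\xi_0,n,x,\theta^{[n]})=\max\{a(\xi_0,x),b(\xi_0,x)\}$, which together with the previous paragraph proves \eqref{eq:DyProg} and the optimality of $\theta^{[n]}$; measurability of $V(\cdot,n,\cdot)$, needed to continue the induction, is immediate from \eqref{eq:DyProg} since $a,b$ are measurable.

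The main difficulty is organizational rather than a single sharp estimate: one has to propagate the joint measurability (and the integrability of $\varphi$ against all the relevant distributions) through the induction so that every conditional expectation and the selection defining $\pi^{[n]}_1$ are legitimate, and one must keep straight that for a \emph{fixed} bandit $\theta_1$ is a deterministic bit while it is nevertheless allowed to vary measurably with $(\xi_0,x)$ when the bandit parameters change — this is exactly what the arguments $(\xi_0,x)$ in the functions $\pi^{[n]}_i$ record. A secondary point to handle carefully is the index shift and relabelling of observation variables between $\theta[u]$ and $\rho$ in Lemma~\ref{lem:NtoN_1}, which must be carried out consistently in both the upper-bound and the attainment arguments so that the equality $h(x,u)=V(\xi_1(u),n-1,x+u)$ is genuinely exact for the constructed $\theta^{[n]}$.
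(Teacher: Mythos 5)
Your proposal is correct and follows essentially the same route as the paper: the upper bound comes from Lemma~\ref{lem:Exp} together with Lemma~\ref{lem:NtoN_1} (reducing $h(x,u)$ to an $(n-1)$-trial value), and the attainment comes from the inductive construction that plays one fixed arm first and then follows the $(n-1)$-trial optimal strategy supplied by the induction hypothesis, choosing between the two first moves by comparing the resulting expected utilities. The only cosmetic difference is bookkeeping: you strengthen the induction with measurability of $V(\cdot,n,\cdot)$, whereas the paper tracks measurability of the expected utilities of the two candidate strategies $\bar\theta$, $\widetilde\theta$ in $(\xi_0,x)$, which amounts to the same thing.
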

\begin{proof}
	Firstly, let's prove that the left side in \eqref{eq:DyProg} is less or equal to the right side.
	By equation \eqref{eq:V_defn},
	\begin{align*}
		&V(\xi_0,n,x)=\sup_{\theta\in \Theta_n} E_{\xi_0}\biggl[\varphi\biggl(x+\sum_{i=1}^n Z^\theta_i\biggr)\biggr].
	\end{align*}
	Applying Lemma \ref{lem:Exp}, there is
	\begin{align}
		V(\xi_0,n,x)
		= \sup_{\theta\in \Theta_n} E_{\xi_0}\left[h^\theta(x,Z_1^\theta)\right],
	\end{align}
	where $h^\theta(x,u)=E_{\xi_1^\theta(u)}\left[\varphi\Bigl(x+u+\sum_{i=2}^{n} Z_i^{\theta[u]}\Bigr)\right].$
	Note that here the conditional strategy $\theta[u]$ is obtained by fixing the first payoff to $u$ for strategy $\theta$. For any fixed $\theta$ and $u$, by Lemma \ref{lem:NtoN_1}, there exists a strategy $\rho\in \Theta_{n-1}$ that has the same distribution with $\theta[u]$ and statisfies
	\begin{align}
		E_{\xi_1^\theta(u)}\biggl[\varphi\biggl(x+u+\sum_{i=2}^{n} Z_i^{\theta[u]}\biggr)\biggr]=E_{\xi_1^\theta(u)}\biggl[\varphi\biggl(x+u+\sum_{i=1}^{n-1} Z_i^{\rho}\biggr)\biggr].
\end{align}
	Hence we get
	\begin{align}
		h^\theta(x,u)\le \sup_{\rho\in \Theta_{n-1}} E_{\xi_1^\theta(u)}\biggl[\varphi\biggl(x+u+\sum_{i=1}^{n-1} Z_i^{\rho}\biggr)\biggr]=V\left(\xi_1^\theta(u),n-1,x+u\right).
	\end{align}
	Then we obtain the following inequality,
	\begin{equation}
		\begin{aligned}
		V(\xi_0,n,x)&\le \sup_{\theta\in \Theta_n} E_{\xi_0}\left[V\left(\xi_1^\theta,n-1,x+Z^\theta_1\right)\right]\\
			    &= \max \left\{ E_{\xi_0}\left[V\biggl(\frac{\xi_0f_1(X_1) }{\xi_0 f_1(X_1)+(1-\xi_0)f_2(X_1)},n-1,x+X_1\biggr)\right],\right.\\
		&~~~~~~~~~~~\left.E_{\xi_0}\left[V\biggl(\frac{\xi_0f_2(Y_1) }{\xi_0 f_2(Y_1)+(1-\xi_0)f_1(Y_1)},n-1,x+Y_1\biggr)\right] \right\}.
	\end{aligned}
	\end{equation}
	The equality here is due to the fact that there are only two possible forms of the posterior probability $\xi_1^\theta$, see equation \eqref{eq-3}.
	
	Now we prove the reverse inequality and the existence of the optimal strategy by mathematical induction.
	
	When $n=1$, define strategy $\theta^{[1]}$ by
	\begin{equation}
	\begin{aligned}
		\theta^{[1]}_1=&\pi^{[1]}_1(\xi_0,x)\\:=& \begin{cases}
			1,\quad \text{if $(2\xi_0-1)\Bigl[ \int\varphi(x+u)f_1(u)\ud u-\int\varphi(x+u)f_2(u)\ud u\Bigr]\ge 0,$}\\
			0,\quad \text{if $(2\xi_0-1)\Bigl[ \int\varphi(x+u)f_1(u)\ud u-\int\varphi(x+u)f_2(u)\ud u\Bigr]< 0.$}
		\end{cases}
	\end{aligned}
\end{equation}
	By the measurability of $\varphi$, it can be easily varified that $\pi^{[1]}_1$ is measurable and $\theta^{[1]}$ is the optimal strategy of $(\xi_0,1,x)$-bandit for any $\xi_0\in[0,1]$ and $x\in \mathbb{R}$.
	
	Now we assume that, for any $\xi_0\in[0,1]$ and $x\in \mathbb{R}$, there exists optimal strategy $\theta^{[n-1]}\in \Theta_{n-1}$, and measurable functions $\bigl\{\pi^{[n-1]}_i\bigr\}_{1\le i\le n-1}$ for $(\xi_0,n-1,x)$-bandit  satisfy the condition \eqref{eq:OptSt}. We will find the optimal strategy for the $(\xi_0,n,x)$-bandit.
	
	Define strategy $\bar\theta=\{\bar \theta_1,\dots,\bar \theta_n\} \in \Theta_n$ by
	\begin{align*}
		\bar \theta_1&=1,\\
		\bar \theta_i&=\pi^{[n-1]}_{i-1}(\xi^{\bar \theta}_1(X_1),x+X_1,X_2,Y_2,\dots,X_{i-1},Y_{i-1}), \text{ for $2\le i\le n$,}
\end{align*}
	where $\xi^{\bar \theta}_1(X_1)=\frac{\xi_0f_1(X_1) }{\xi_0 f_1(X_1)+(1-\xi_0)f_2(X_1)}$.

	Then by Lemma \ref{lem:NtoN_1}, for any $u\in\mathbb{R}$, there is
	\begin{align*}
		E_{\xi^{\bar{\theta}}_1(u)}\biggl[\varphi\biggl(x+u+\sum_{i=2}^{n}Z^{{\bar{\theta}}[u]}_i\biggr)\biggr]
		=&E_{\xi^{\bar{\theta}}_1(u)}\biggl[\varphi\biggl(x+u+\sum_{i=1}^{n-1}Z^{\theta^{[n-1]}}_i\biggr)\biggr]\\
		=&V\bigg(\frac{\xi_0f_1(u) }{\xi_0 f_1(u)+(1-\xi_0)f_2(u)}, n-1, x+u\bigg).
	\end{align*}
	Then by Lemma \ref{lem:Exp}, we obtain
	\begin{align}
		W(\xi_0,n,x,\bar{\theta})
		=&E_{\xi_0}\biggl[\varphi\biggl(x+\sum_{i=1}^n Z^{\bar{\theta}}\biggr)\biggr]\notag\\
		=&E_{\xi_0}\biggl[V\biggl(\frac{\xi_0f_1(X_1) }{\xi_0 f_1(X_1)+(1-\xi_0)f_2(X_1)}, n-1, x+X_1\biggr)\biggr].
	\end{align}
	
	Similarly, we can define $\widetilde \theta=\{\widetilde \theta_1,\dots,\widetilde \theta_n\} \in \Theta_n$ by
	\begin{align*}
		\widetilde \theta_1&=0,\\
		\widetilde \theta_i&=\pi^{[n-1]}_{i-1}(\xi^{\widetilde \theta}_1(Y_1),x+Y_1,X_2,Y_2,\dots,X_{i-1},Y_{i-1}), \text{ for $2\le i\le n$,}
	\end{align*}
	where $\xi^{\widetilde \theta}_1(Y_1)=\frac{\xi_0f_2(Y_1) }{\xi_0 f_2(Y_1)+(1-\xi_0)f_1(Y_1)}$.

	We can also get
	\begin{align}
		W(\xi_0,n,x,\widetilde \theta)
		=&E_{\xi_0}\biggl[\varphi\biggl(x+\sum_{i=1}^n Z^{\widetilde \theta}\biggr)\biggr]\notag\\
		=&E_{\xi_0}\biggl[V\biggl(\frac{\xi_0f_2(Y_1) }{\xi_0 f_2(Y_1)+(1-\xi_0)f_1(Y_1)}, n-1, x+Y_1\biggr)\biggr].
	\end{align}
	
	By the definition of $V(\xi_0,n,x)$, we have
	\begin{equation}
	\begin{aligned}
		V(\xi_0,n,x)&\ge
		\max\left\{W(\xi_0,n,x,\bar{\theta}), W(\xi_0,n,x,\widetilde \theta)\right\}\\
		&= \max \left\{ E_{\xi_0}\biggl[V\bigg(\frac{\xi_0f_1(X_1) }{\xi_0 f_1(X_1)+(1-\xi_0)f_2(X_1)},n-1,x+X_1\bigg)\biggr],\right.\\
		&~~~~~~~~~~~\left.E_{\xi_0}\biggl[V\bigg(\frac{\xi_0f_2(Y_1) }{\xi_0 f_2(Y_1)+(1-\xi_0)f_1(Y_1)},n-1,x+Y_1\bigg)\biggr] \right\}.
	\end{aligned}
	\end{equation}
	Now the reverse inequality holds. If $\bar \theta$ achieves the optimal expected utility, then let $\theta^{[n]}=\bar \theta$; and if $\widetilde \theta$ achieves the optimal expected utility, let $\theta^{[n]}=\widetilde \theta$. Hence $\theta^{[n]}$ is clearly the optimal strategy for the $(\xi,n,x)$-bandit.

	Using the definition, we can easily varify that both $W(\xi_0,n,x,\bar \theta)$ and $W(\xi_0,n,x,\widetilde \theta)$ are measurable functions of $(\xi_0,x)$. This fact, along with the measureability of $\pi^{[n-1]}$, $\xi_1^{\bar \theta}(u)$ and $\xi_1^{\widetilde \theta}(u)$, guarantees the existence and measureability of$\{\pi^{[n]}_i\}_{1\le i\le n}$.
\end{proof}

Now we are going to study the specific form of the optimal strategy, for a finite two-armed bandit with utility function $\varphi$. It is obvious that different utility functions may lead to different optimal strategies, but we will show that when a reasonable condition is satisfied, the optimal strategy is independent of the specific form of $\varphi$.

Recall that the myopic strategy for $(\xi_0,n,x)$-bandits is $\mathsf{M}^n=\{\mathsf{m}^n_1,\cdots,\mathsf{m}^n_n\}$: in trial $i$, play $X$-arm if the posterior probability $\xi_{i-1}^{\mathsf{M}^n}\ge \frac{1}{2}$, or $Y$-arm if $\xi_{i-1}^{\mathsf{M}^n}<\frac{1}{2}$. Note that $\mathsf{M}^n\in\Theta_n$. In fact, $\mathsf{M}^n$ can be denoted by
\begin{equation}\begin{split}\label{Feldman Stra}
		\mathsf{M}^n=&\{\mathsf{m}^n_1,\cdots,\mathsf{m}^n_n\}\\
		=&\left\{g\bigl(\xi_0\bigr),g\bigl(\xi_1^{\mathsf{M}^n}\bigr),\cdots,g\bigl(\xi_{n-1}^{\mathsf{M}^n}\bigr)\right\}\in\Theta_n,
\end{split}\end{equation}
where $g(x)=1,\text{ if $x\geq\frac{1}{2}$,}$ or $g(x)=0,\text{if $x<\frac{1}{2}$}$. From the definition of $\xi_{i-1}^{\mathsf{M}^n}$ and $\mathsf{m}^n_i$, we know that they are both independent of $n$ and $x$. Hence we can write $\xi_{i-1}^{\mathsf{M}^n}$ shortly as $\xi_{i-1}^{\mathsf{M}}$, and write $\mathsf{m}^n_i$ shortly as $\mathsf{m}_i$. Then the myopic strategy $\mathsf{M}^n$ is now denoted by
\begin{equation}
	\begin{aligned}
	\mathsf{M}^n=&\{\mathsf{m}_1,\cdots,\mathsf{m}_n\}\\
	=&\left\{g\bigl(\xi_0\bigr),g\bigl(\xi_1^{\mathsf{M}}\bigr),\cdots,g\bigl(\xi_{n-1}^{\mathsf{M}}\bigr)\right\}.
	\end{aligned}
\end{equation}
Next, we will give a condition on $\varphi$ which is necessary and sufficient for $\mathsf{M}^n$ being the optimal strategy.
\begin{thm}
	\label{thm:main}
	For any integer $n\ge 1$, the myopic strategy $\mathsf{M}^n$ is the optimal strategy of the $(\xi_0,n,x)$-bandit for $\forall \xi_0\in[0,1]$, $\forall x\in \mathbb{R}$,  if and only if
	\begin{equation}
		\label{conditionI}
		E_1[\varphi(u+X_1)]\ge E_1[\varphi(u+Y_1)], \quad \text{for }\,\forall u\in \mathbb{R}.\tag{I}
	\end{equation}
\end{thm}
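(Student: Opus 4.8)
The plan is to prove the two implications separately, with the forward direction short and the converse an induction on $n$ driven by the dynamic programming identity of Theorem~\ref{thm:R*}. For necessity, suppose $\mathsf{M}^n$ is optimal for every $(\xi_0,n,x)$-bandit and specialise to $n=1$ and $\xi_0>\tfrac12$, so that $\mathsf{M}^1$ plays the $X$-arm. Using the intrinsic $H_1\leftrightarrow H_2,\ X\leftrightarrow Y$ symmetry of model~\eqref{bandit} (which gives $E_2[\varphi(x+X_1)]=E_1[\varphi(x+Y_1)]$ and $E_2[\varphi(x+Y_1)]=E_1[\varphi(x+X_1)]$), the value of $\mathsf{M}^1$ minus the value of playing $Y$ equals $(2\xi_0-1)\bigl(E_1[\varphi(x+X_1)]-E_1[\varphi(x+Y_1)]\bigr)$, and optimality forces this to be $\ge 0$; since $\xi_0>\tfrac12$ and $x$ are arbitrary, this is exactly~\eqref{conditionI}.

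For sufficiency, assume~\eqref{conditionI}. Before running the induction I would record three facts about the value function. First, $V(\xi,m,y)=V(1-\xi,m,y)$, by the symmetry above, under which the myopic rule is equivariant. Second, $\xi\mapsto V(\xi,m,y)$ is convex, being a supremum over $\theta$ of the affine maps $\xi\mapsto W(\xi,m,y,\theta)$; together with the first fact this makes it ``V-shaped'', with minimum at $\tfrac12$ and non-decreasing on $[\tfrac12,1]$. Third, \eqref{conditionI} propagates: if $\psi$ satisfies $\int f_1(v)\psi(z+v)\,\ud v\ge\int f_2(v)\psi(z+v)\,\ud v$ for all $z$, then so does $y\mapsto E_j[\psi(y+Z)]$ for any random variable $Z$; since the induction hypothesis identifies $V(\cdot,m,\cdot)$ with the value of the single strategy $\mathsf{M}^m$, it follows that $V(\xi,m,\cdot)$ inherits~\eqref{conditionI} for every $m$ already treated. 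The base case $n=1$ is immediate from~\eqref{conditionI} exactly as in the necessity computation (for $\xi_0\ge\tfrac12$ playing $X$ beats playing $Y$, and symmetrically otherwise).

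For the inductive step assume $V(\cdot,n-1,\cdot)=W(\cdot,n-1,\cdot,\mathsf{M}^{n-1})$. By Theorem~\ref{thm:R*}, $V(\xi_0,n,x)=\max\{A(\xi_0),B(\xi_0)\}$ with $A(\xi_0)=E_{\xi_0}\bigl[V(\xi_1^X,n-1,x+X_1)\bigr]$ and $B(\xi_0)=E_{\xi_0}\bigl[V(\xi_1^Y,n-1,x+Y_1)\bigr]$; moreover, since the myopic actions depend only on the posterior and not on the horizon, $A(\xi_0)$ is the value of $\mathsf{M}^n$ when $\xi_0\ge\tfrac12$ and $B(\xi_0)$ is its value when $\xi_0<\tfrac12$. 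Hence the theorem reduces to the single inequality $A(\xi_0)\ge B(\xi_0)$ for $\xi_0\ge\tfrac12$ (the opposite case follows by symmetry). Using the first recorded fact one checks $B(\xi_0)=A(1-\xi_0)$, so the claim is $A(\xi_0)\ge A(1-\xi_0)$ for $\xi_0\ge\tfrac12$. Writing $A(\xi_0)=\int\bigl(\xi_0 f_1(u)+(1-\xi_0)f_2(u)\bigr)\,V\!\bigl(\tfrac{\xi_0 f_1(u)}{\xi_0 f_1(u)+(1-\xi_0)f_2(u)},n-1,x+u\bigr)\,\ud u$ and expanding $V(\eta,n-1,y)=\eta\,V_1(\eta,n-1,y)+(1-\eta)V_2(\eta,n-1,y)$, where $V_j(\eta,n-1,\cdot)$ denotes the $H_j$-conditional expected utility of the optimal $(n-1)$-strategy and $V_2(\eta,n-1,y)=V_1(1-\eta,n-1,y)$ by symmetry, the difference $A(\xi_0)-A(1-\xi_0)$ splits, after partitioning $\mathbb{R}$ according to the likelihood ratio $f_1/f_2$ (equivalently, according to whether the post-$X$ posterior from $\xi_0$ lies above, below, or straddles $\tfrac12$), into a part that is controlled pointwise by the V-shape of fact two together with the monotonicity of $\xi_1^\theta$ in the prior noted after~\eqref{eq-3}, plus a ``cross'' part of the shape $(2\xi_0-1)\int(f_1-f_2)(u)\,[\,\cdot\,]\,\ud u$ that is handled by the inherited form of~\eqref{conditionI} from fact three.

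The hard part is this last comparison $A(\xi_0)\ge A(1-\xi_0)$ — intuitively, that playing the ``$F_1$-arm'' first never hurts once one plays optimally afterwards. The integrand is genuinely not pointwise of the right sign (on the set where $f_1<f_2$ it contributes the wrong way), so the argument cannot be a termwise estimate: it must exploit a cancellation between the $\{f_1>f_2\}$ and $\{f_1<f_2\}$ regions, and it is precisely here that Condition~\eqref{conditionI}, in the strengthened form that it is inherited by $V(\cdot,n-1,\cdot)$, is indispensable and where the nonlinearity of $\varphi$ prevents the linear bookkeeping of \cite{Feldman1962} from applying. Once this inequality is in hand, the corollary on Nouiehed and Ross's conjecture follows by taking $\varphi=I_{[k,+\infty)}$ and $x=0$, since then~\eqref{conditionI} is automatic for the Bernoulli model.
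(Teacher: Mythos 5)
Your necessity argument and the overall induction scaffold (base case $n=1$, dynamic programming via Theorem~\ref{thm:R*}, identification of the two branches $A(\xi_0)$, $B(\xi_0)$ with the myopic value, and the reduction to $A(\xi_0)\ge A(1-\xi_0)$ for $\xi_0\ge\tfrac12$ via the symmetry $B(\xi_0)=A(1-\xi_0)$) all match what is actually needed and are correct; Fact 3 (condition~\eqref{conditionI} is inherited by $W(\cdot,m,\cdot,\mathsf{M}^m)$ because the myopic strategy does not depend on the fund) is also a sound observation. But the proof has a genuine gap exactly at the point you yourself flag as ``the hard part'': the inequality $A(\xi_0)\ge A(1-\xi_0)$ for $\xi_0\ge\tfrac12$ is never proved. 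You propose to split the difference into a piece controlled pointwise by the V-shape of $V(\cdot,n-1,y)$ and a ``cross'' piece of the form $(2\xi_0-1)\int(f_1-f_2)(u)[\,\cdot\,]\,\ud u$ handled by the inherited condition, but no such decomposition is exhibited, and it is not clear one exists: in $A(\xi_0)-A(1-\xi_0)$ both the mixing weights $\xi_0f_1+(1-\xi_0)f_2$ versus $(1-\xi_0)f_1+\xi_0f_2$ and the posterior arguments of $V$ change simultaneously, so convexity-plus-symmetry of $V$ in $\xi$ together with the inherited form of \eqref{conditionI} does not, as stated, produce the required cancellation between the regions $\{f_1>f_2\}$ and $\{f_1<f_2\}$. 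Acknowledging that a cancellation ``must be exploited'' is a plan, not an argument.

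The paper closes precisely this step with machinery your proposal has no analogue of: an interchange lemma (playing $X$ then $Y$ and then myopically has the same expected utility as playing $Y$ then $X$ and then myopically, Lemma~\ref{lem:XY=YX}), which yields a recurrence for the first-move advantage $\Delta_n(x,\xi_0)=W(\xi_0,n,x,\mathsf{L}^n)-W(\xi_0,n,x,\mathsf{R}^n)$ (Lemma~\ref{lem:DeltaNtoN_1}); then a Rodman-type homogenization $D_n(x,t_X,t_Y)=(t_X+t_Y)\Delta_n\bigl(x,\tfrac{t_X}{t_X+t_Y}\bigr)$ and an induction showing $D_n$ is increasing in $t_X$ under \eqref{conditionI} (Lemma~\ref{lem:DInc}), so that $\Delta_n(x,\cdot)$ is increasing with $\Delta_n(x,\tfrac12)=0$, giving $\Delta_n(x,\xi_0)\ge0$ for $\xi_0\ge\tfrac12$. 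Note that in the recurrence the indicator regions make each integrand signed correctly only because of the interchange identity and the antisymmetry $D_k(x,t_X,t_Y)=-D_k(x,t_Y,t_X)$ — this is where the cancellation you allude to is actually organized. To repair your proof you would need either to reproduce this monotonicity-in-the-prior argument (or an equivalent of Lemmas~\ref{lem:XY=YX}--\ref{lem:DInc}), or to supply a genuinely worked-out alternative for $A(\xi_0)\ge A(1-\xi_0)$; as written, the central inequality is asserted, not established.
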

\begin{rem}
	When $\varphi(x)=x$, the condition (I) is actually
	\begin{align}E_1[X_1]\ge E_1[Y_1],\end{align}
	and Theorem \ref{thm:main} in this case is exactly Theorem 2.1 in Feldman\cite{Feldman1962}.
\end{rem}
\begin{rem}
	When the two distributions $F_1$, $F_2$ are Bernoulli distributions, say, Bernoulli($\alpha$) and Bernoulli($\beta$), $\alpha,\beta\in(0,1)$, then the condition \eqref{conditionI} is written as
	\begin{equation}
		(\varphi(x+1)-\varphi(x))(\alpha-\beta)\ge 0, \text{ for any $x\in \mathbb{R}$}.
	\end{equation}
	If $\varphi(x)$ is an increasing function of $x$, and $\alpha\ge \beta$, then the condition \eqref{conditionI} holds.
\end{rem}
\begin{rem}
	Note that the condition \eqref{conditionI} here is a necessary and sufficient condition to make $\mathsf{M}^n$ the optimal strategy of the $(\xi_0,n,x)$-bandit model for any $x\in \mathbb{R}$ and any $\xi_0\in[0,1]$. However, when the condition \eqref{conditionI} is not satisfied, it is still possible that there is a specific triple $(\overline{\xi_0},\overline{n},\overline{x})$ that makes $\mathsf{M}^{\overline{n}}$ the optimal strategy of the $(\overline{\xi_0},\overline{n},\overline{x})$-bandit, but the optimality of $\mathsf{M}^n$ does not hold for general $(\xi_0,n,x)$ triples. 
\end{rem}

To achieve our goal, we need to formulate some properties of the expected utility of $\mathsf{M}^n$, which can be considered as extensions of \textit{Properties of $R_N$}, and Lemma 2.1 in Feldman's paper\cite{Feldman1962}.
\begin{lem}
	\label{lem:R^*N}
	For each $n\in\mathbb{Z}^+$, we have
	
	1) The expected utility with strategy $\mathsf{M}^n$ is symmetric about $\xi_0=\frac{1}{2}$, i.e.
	\begin{align}
		W(\xi_0,n,x,\mathsf{M}^n)=W(1-\xi_0,n,x,\mathsf{M}^n), ~\forall \xi_0\in [0,1], \forall x\in\mathbb{R}.
	\end{align}
	
	2) Let strategies $\mathsf{L}^n=\{1,g(\xi_1^{\mathsf{L}^n}),\cdots,g(\xi_{n-1}^{\mathsf{L}^n})\}$ and $\mathsf{R}^n=\{0,g(\xi_1^{\mathsf{R}^n}),\cdots,g(\xi_{n-1}^{\mathsf{R}^n})\}$, where $g(\cdot)$ is the function defined in \eqref{Feldman Stra}, then
	\begin{align}\label{eq:S^1=S^2}
		W(\xi_0,n,x,\mathsf{L}^n)=W(1-\xi_0,n,x,\mathsf{R}^n), ~\forall \xi_0\in [0,1], \forall x\in\mathbb{R}.
	\end{align}
	%
\end{lem}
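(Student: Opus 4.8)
The plan is to prove 1) by induction on $n$ and then read off 2) from 1). Everything hinges on the single structural symmetry of the model \eqref{bandit}: replacing the prior $\xi_0$ by $1-\xi_0$ interchanges $H_1$ and $H_2$, and under $H_2$ the arms $X,Y$ carry the laws $F_2,F_1$, so this is the same as swapping the two arms. The myopic rule respects this symmetry: with
\[
\zeta(u):=\frac{\xi_0 f_1(u)}{\xi_0 f_1(u)+(1-\xi_0)f_2(u)},
\]
formula \eqref{eq-3} shows that playing the $X$-arm from prior $\xi_0$ and seeing $u$ produces posterior $\zeta(u)$, while playing the $Y$-arm from prior $1-\xi_0$ and seeing the same $u$ produces posterior $1-\zeta(u)$; since moreover $g(\xi)=1\iff g(1-\xi)=0$ away from the tie $\xi=\tfrac12$, the myopic trajectory from $1-\xi_0$ is the arm-reflected myopic trajectory from $\xi_0$, with every posterior reflected about $\tfrac12$. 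At $\xi_0=\tfrac12$ both claims are trivial ($1-\xi_0=\xi_0$), and the identity in 1) is unchanged under $\xi_0\leftrightarrow1-\xi_0$, so for 1) I may take $\xi_0>\tfrac12$.

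The inductive engine is a one-step reduction. For $n\ge2$, Lemma \ref{lem:Exp} (fix the first payoff, then integrate it out), Lemma \ref{lem:NtoN_1} (re-index trials $2,\dots,n$ as $1,\dots,n-1$), and the $n$- and $x$-independence of the myopic posteriors and actions recorded after \eqref{Feldman Stra} together say that, once the first payoff is known, the continuation of $\mathsf{M}^n$ is exactly $\mathsf{M}^{n-1}$ on the sub-bandit with the updated prior and fund. Writing this out with $E_{\xi_0}[\cdot]=\xi_0 E_1[\cdot]+(1-\xi_0)E_2[\cdot]$ and the laws of $X_1,Y_1$ under $H_1,H_2$: since $\xi_0>\tfrac12$ makes $\mathsf{M}^n$ play the $X$-arm in trial $1$,
\[
W(\xi_0,n,x,\mathsf{M}^n)=\int_{\mathbb R} W\!\left(\zeta(u),\,n-1,\,x+u,\,\mathsf{M}^{n-1}\right)\bigl(\xi_0 f_1(u)+(1-\xi_0)f_2(u)\bigr)\,\ud u,
\]
while from prior $1-\xi_0<\tfrac12$ the myopic strategy plays the $Y$-arm in trial $1$ and, using $\tfrac{(1-\xi_0)f_2(u)}{(1-\xi_0)f_2(u)+\xi_0 f_1(u)}=1-\zeta(u)$, one obtains the same integral against the same weight but with $\zeta(u)$ replaced by $1-\zeta(u)$.

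The inductive step for 1) is then immediate: the two identities express $W(\xi_0,n,x,\mathsf{M}^n)$ and $W(1-\xi_0,n,x,\mathsf{M}^n)$ as integrals, against the common weight, of $W(\zeta(u),n-1,x+u,\mathsf{M}^{n-1})$ and $W(1-\zeta(u),n-1,x+u,\mathsf{M}^{n-1})$, and the induction hypothesis $W(1-\zeta(u),n-1,\cdot,\mathsf{M}^{n-1})=W(\zeta(u),n-1,\cdot,\mathsf{M}^{n-1})$ equates them. The base case $n=1$ is a direct computation from \eqref{E_xi}: for $\xi_0>\tfrac12$ the left side plays $X$ and the right side (prior $1-\xi_0$) plays $Y$, so both equal $\int_{\mathbb R}\varphi(x+u)\bigl(\xi_0 f_1(u)+(1-\xi_0)f_2(u)\bigr)\,\ud u$ by the $H_2$ arm swap. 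For 2), note that $\mathsf{L}^n$ and $\mathsf{R}^n$ merely force the first move; running the identical reduction for $\mathsf{L}^n$ from $\xi_0$ (forced $X$, posterior $\zeta(X_1)$, then $\mathsf{M}^{n-1}$) and for $\mathsf{R}^n$ from $1-\xi_0$ (forced $Y$, posterior $1-\zeta(Y_1)$, then $\mathsf{M}^{n-1}$) gives, for \emph{every} $\xi_0\in[0,1]$, that $W(\xi_0,n,x,\mathsf{L}^n)$ and $W(1-\xi_0,n,x,\mathsf{R}^n)$ are integrals, against the same weight $\xi_0 f_1(u)+(1-\xi_0)f_2(u)$, of $W(\zeta(u),n-1,x+u,\mathsf{M}^{n-1})$ and $W(1-\zeta(u),n-1,x+u,\mathsf{M}^{n-1})$; applying part 1) to the latter integrand finishes it (with $n=1$, where $\mathsf{L}^1=\{1\}$ and $\mathsf{R}^1=\{0\}$, treated as in the base case above).

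I expect the one genuinely delicate point to be the claim used throughout — that the fixed-first-payoff continuation of $\mathsf{M}^n$ (likewise of $\mathsf{L}^n$ and $\mathsf{R}^n$) is literally $\mathsf{M}^{n-1}$ on the sub-bandit. This amounts to matching the explicit strategy $\rho\in\Theta_{n-1}$ constructed in the proof of Lemma \ref{lem:NtoN_1} against the posterior recursion \eqref{posterior}, which is legitimate precisely because \eqref{posterior} and the threshold rule $g$ from \eqref{Feldman Stra} involve neither $n$ nor $x$; this matching deserves to be written out carefully. Everything else — the expansions via $E_{\xi_0}=\xi_0 E_1+(1-\xi_0)E_2$, the $H_1\leftrightarrow H_2$ arm swap, and the harmless boundary check at $\xi=\tfrac12$ — is routine.
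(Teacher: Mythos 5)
Your proposal is correct and follows essentially the same route as the paper: the same one-step conditioning via Lemmas \ref{lem:Exp} and \ref{lem:NtoN_1}, the identification of the fixed-first-payoff continuation with $\mathsf{M}^{n-1}$, the common weight $\xi_0 f_1(u)+(1-\xi_0)f_2(u)$, and the posterior reflection $\zeta(u)\leftrightarrow 1-\zeta(u)$, closed by induction. The only difference is organizational — you induct on part 1) and then deduce 2), whereas the paper inducts on 2) and gets 1) from $\mathsf{M}^n\in\{\mathsf{L}^n,\mathsf{R}^n\}$ — which is immaterial.
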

\begin{proof}
	By the definitions of $\mathsf{M}^n$, $\mathsf{L}^n$, and $\mathsf{R}^n$, we can easily get that $\mathsf{M}^n=\mathsf{L}^n$ when $\xi_0\ge 1/2$, and $\mathsf{M}^n=\mathsf{R}^n$ when $\xi_0< 1/2$. Hence we have the following relations for their expected utilities.

	\begin{align}
		W(\xi_0,n,x,\mathsf{M}^n)&=\begin{cases}
			W(\xi_0,n,x,\mathsf{L}^n),\quad\text{if $\xi_{0}\geq\frac{1}{2}$,}\\
			W(\xi_0,n,x,\mathsf{R}^n),\quad\text{otherwise,}
		\end{cases}\label{eq:Wcase}\\
		W(1-\xi_0,n,x,\mathsf{M}^n)&=\begin{cases}
			W(1-\xi_0,n,x,\mathsf{R}^n),\quad\text{if $\xi_{0}\geq\frac{1}{2}$,}\\
			W(1-\xi_0,n,x,\mathsf{L}^n),\quad\text{otherwise.}
		\end{cases}
	\end{align}
	So we only need to prove \eqref{eq:S^1=S^2}.
	
	Let us use mathematical induction. Starting with $n=1$, we have
	\begin{align*}
		E_{\xi_0}\bigl[\varphi\bigl(x+ Z_1^{r_1}\bigr)\bigr]
		&=E_{\xi_0}[\varphi(x+X_1)]\\
		&=\xi_{0}\int_{\mathbb R}\varphi(x+u)f_1(u)\ud u+(1-\xi_{0})\int_{\mathbb R} \varphi(x+u)f_2(u)\ud u\\
		&=E_{1-\xi_0}[\varphi(x+Y_1)]\\
		&=E_{1-\xi_0}\bigl[\varphi\bigl(x+ Z_1^{r_2}\bigr)\bigr].
	\end{align*}
	Then the desired equation $W(\xi_0,1,x,\mathsf{L}^1)=W(1-\xi_0,1,x,\mathsf{R}^1)$ holds.

	Assume that \eqref{eq:S^1=S^2} holds for $n=k$, that is for each $\xi_0\in [0,1]$,and $ x\in\mathbb{R}$,
	\begin{align}
		W(\xi_0,k,x,\mathsf{L}^k)=W(1-\xi_0,k,x,\mathsf{R}^k), ~\forall \xi_0\in [0,1], \forall x\in\mathbb{R},
	\end{align}
	and
	\begin{align}
		W(\xi_0,k,x,\mathsf{M}^k)=W(1-\xi_0,k,x,\mathsf{M}^k), ~\forall \xi_0\in [0,1], \forall x\in\mathbb{R}.
	\end{align}
	For $n=k+1$, we can obtain the following relation by using Lemma \ref{lem:Exp}.
	\begin{equation}\label{eq:LeqR_1}
	\begin{aligned}
		&E_{\xi_0}\biggl[\varphi\biggl(x+\sum_{i=1}^{k+1} Z_i^{\mathsf{L}^{k+1}}\biggr)\biggr]\\
		=&\int_{\mathbb R} E_{\xi_1^{\mathsf{L}^{k+1}}(u)}\biggl[\varphi\biggl(x+u+\sum_{i=2}^{k+1} Z_i^{\mathsf{L}^{k+1}[u]}\biggr)\biggr]\left(\xi_0f_{1}(u)+(1-\xi_0)f_{2}(u)\right)\ud u.
	\end{aligned}
	\end{equation}
	Recall that $\mathsf{L}^{k+1}[u]$ is obtained from $\mathsf{L}^{k+1}$ by fixing the payoff of the first trial to be $u$. By Lemma \ref{lem:NtoN_1} and the definitions of $\mathsf{L}^{k+1}$ and $\mathsf{M}^k$, it can be easily varified that
	\begin{equation}\label{eq:LeqR_2}
		\begin{aligned}	
		E_{\xi_1^{\mathsf{L}^{k+1}}(u)}\biggl[\varphi\biggl(x+u+\sum_{i=2}^{k+1} Z_i^{\mathsf{L}^{k+1}[u]}\biggr)\biggr]&=E_{\xi_1^{\mathsf{L}^{k+1}}(u)}\biggl[\varphi\biggl(x+u+\sum_{i=1}^{k} Z_i^{\mathsf{M}^{k}}\biggr)\biggr]\\
																&=W(\xi_1^{\mathsf{L}^{k+1}}(u),k,x+u,\mathsf{M}^{k}).
	\end{aligned}
	\end{equation}
	And by the properties of posterior probability, we have
	\begin{equation}\label{eq:LeqR_3}
	\begin{aligned}
\xi_1^{\mathsf{L}^{k}}(u)=
		\xi_1^{\mathsf{L}^{k+1}}(u)=&\frac{\xi_0f_1(u)}{\xi_0f_1(u)+(1-\xi_0)f_2(u)}.
	\end{aligned}
	\end{equation}
	Combining the above equations \eqref{eq:LeqR_1}, \eqref{eq:LeqR_2} and \eqref{eq:LeqR_3}, we can get
	\begin{equation}\label{eq:LeqR_4}
	\begin{aligned}
		&E_{\xi_0}\biggl[\varphi\biggl(x+\sum_{i=1}^{k+1} Z_i^{\mathsf{L}^{k+1}}\biggr)\biggr]\\
		=&\int_{\mathbb R} W\Bigl(\frac{\xi_0f_1(u)}{\xi_0f_1(u)+(1-\xi_0)f_2(u)},k,x+u,\mathsf{M}^{k}\Bigr)\left(\xi_0f_{1}(u)+(1-\xi_0)f_{2}(u)\right)\ud u.
	\end{aligned}
	\end{equation}
	A similar discussion gives
	\begin{equation}\label{eq:LeqR_5}
	\begin{aligned}
		&E_{1-\xi_0}\biggl[\varphi\biggl(x+\sum_{i=1}^{k+1} Z_i^{\mathsf{R}^{k+1}}\biggr)\biggr]\\
		=&\int_{\mathbb R} W\Bigl(\frac{(1-\xi_0)f_2(u)}{\xi_0f_1(u)+(1-\xi_0)f_2(u)},k,x+u,\mathsf{M}^{k}\Bigr)\left(\xi_0f_{1}(u)+(1-\xi_0)f_{2}(u)\right)\ud u.
	\end{aligned}
	\end{equation}
	Applying the induction hypothesis, we get
	\begin{equation}
		\begin{aligned}
		  &W\Bigl(\frac{\xi_0f_1(u)}{\xi_0f_1(u)+(1-\xi_0)f_2(u)},k,x+u,\mathsf{M}^{k}\Bigr)\\
		=&W\Bigl(\frac{(1-\xi_0)f_2(u)}{\xi_0f_1(u)+(1-\xi_0)f_2(u)},k,x+u,\mathsf{M}^{k}\Bigr).
	\end{aligned}
	\end{equation}
	Now we obtain that
	\begin{equation}
	\begin{aligned}
		W(\xi_0,k+1,x,\mathsf{L}^{k+1})
		=&E_{\xi_0}\biggl[\varphi\biggl(x+\sum_{i=1}^{k+1} Z_i^{\mathsf{L}^{k+1}}\biggr)\biggr]\\
		=&E_{1-\xi_0}\biggl[\varphi\biggl(x+\sum_{i=1}^{k+1} Z_i^{\mathsf{R}^{k+1}}\biggr)\biggr]\\
		=&W(1-\xi_0,k+1,x,\mathsf{R}^{k+1}).
	\end{aligned}
	\end{equation}
	This completes the proof.

\end{proof}
\begin{lem}
	\label{lem:XY=YX}
	Consider the following strategies:

	For $n=2$, let $\mathsf{U}^2=\{1,0\}$ and $\mathsf{V}^2=\{0,1\}$.

	For each $n\geq3$, let
\begin{align*}
	\mathsf{U}^n&=\{1,0,g(\xi_2^{\mathsf{U}^n}),\cdots,g(\xi_{n-1}^{\mathsf{U}^n})\},\\
	\mathsf{V}^n&=\{0,1,g(\xi_2^{\mathsf{V}^n}),\cdots,g(\xi_{n-1}^{\mathsf{V}^n})\},
\end{align*}
where $g(\cdot)$ is the function defined in \eqref{Feldman Stra}. Then for each $n\ge 2$, $\xi_0\in[0,1]$ and $x\in\mathbb{R}$, the expected utilities obtained by using $\mathsf{U}^n$ and $\mathsf{V}^n$ satisfy the relation
	\begin{align}\label{eq:T^12=T^21}
		W(\xi_0,n,x,\mathsf{U}^n)=W(\xi_0,n,x,\mathsf{V}^n).
	\end{align}
\end{lem}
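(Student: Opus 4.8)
The claim is that swapping which arm is played first (but still playing myopically thereafter) does not change the expected utility. The natural strategy is induction on $n$, mirroring the structure of the proof of Lemma~\ref{lem:R^*N}. The base case $n=2$ is a direct computation: $W(\xi_0,2,x,\mathsf{U}^2)$ and $W(\xi_0,2,x,\mathsf{V}^2)$ are both integrals of $\varphi(x+u+s)$ against a symmetric (in the two coordinates) combination of $f_1$ and $f_2$ — writing out $E_{\xi_0}[\varphi(x+X_1+Y_2)]$ and $E_{\xi_0}[\varphi(x+Y_1+X_2)]$ under both hypotheses and using independence, one sees the two expressions are literally the same double integral with the roles of the integration variables interchanged. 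For the inductive step, I would apply Lemma~\ref{lem:Exp} to peel off the first trial. For $\mathsf{U}^{k+1}=\{1,0,g(\xi_2),\dots\}$, fixing the first payoff $X_1=u$ turns $\xi_0$ into $\xi_1^{\mathsf{U}^{k+1}}(u)=\frac{\xi_0 f_1(u)}{\xi_0 f_1(u)+(1-\xi_0)f_2(u)}$, and by Lemma~\ref{lem:NtoN_1} the conditional remainder is the $(k)$-trial bandit run with the strategy $\mathsf{V}^{k}$ (it plays $Y$ first, then myopically). Symmetrically, for $\mathsf{V}^{k+1}$, fixing $Y_1=u$ turns $\xi_0$ into $\xi_1^{\mathsf{V}^{k+1}}(u)=\frac{\xi_0 f_2(u)}{\xi_0 f_2(u)+(1-\xi_0)f_1(u)}$ and leaves the $(k)$-trial bandit run with $\mathsf{U}^{k}$. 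Thus
\begin{align*}
	W(\xi_0,k+1,x,\mathsf{U}^{k+1})&=\int_{\mathbb R} W\!\Bigl(\tfrac{\xi_0 f_1(u)}{\xi_0 f_1(u)+(1-\xi_0)f_2(u)},k,x+u,\mathsf{V}^{k}\Bigr)\bigl(\xi_0 f_1(u)+(1-\xi_0)f_2(u)\bigr)\,\ud u,\\
	W(\xi_0,k+1,x,\mathsf{V}^{k+1})&=\int_{\mathbb R} W\!\Bigl(\tfrac{\xi_0 f_2(u)}{\xi_0 f_2(u)+(1-\xi_0)f_1(u)},k,x+u,\mathsf{U}^{k}\Bigr)\bigl(\xi_0 f_1(u)+(1-\xi_0)f_2(u)\bigr)\,\ud u.
\end{align*}

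\textbf{Closing the induction.} By the inductive hypothesis, $W(\cdot,k,\cdot,\mathsf{U}^k)=W(\cdot,k,\cdot,\mathsf{V}^k)$, so inside the first integral I may replace $\mathsf{V}^k$ by $\mathsf{U}^k$; then I need $W\!\bigl(\tfrac{\xi_0 f_1(u)}{\xi_0 f_1(u)+(1-\xi_0)f_2(u)},k,x+u,\mathsf{U}^{k}\bigr)=W\!\bigl(\tfrac{\xi_0 f_2(u)}{\xi_0 f_2(u)+(1-\xi_0)f_1(u)},k,x+u,\mathsf{U}^{k}\bigr)$ for each fixed $u$. But the two arguments $\tfrac{\xi_0 f_1}{\xi_0 f_1+(1-\xi_0)f_2}$ and $\tfrac{\xi_0 f_2}{\xi_0 f_1+(1-\xi_0)f_2}$ sum to $1$ — wait, they do not in general; rather $\tfrac{(1-\xi_0)f_2}{\xi_0 f_1+(1-\xi_0)f_2}=1-\tfrac{\xi_0 f_1}{\xi_0 f_1+(1-\xi_0)f_2}$, so the correct route is to note that since $\mathsf{U}^k$ itself starts with $\{1,0,\dots\}$ and the posterior/structure is symmetric under exchanging the two hypotheses, we actually want to pair the first integrand (as a function of $\mathsf{V}^k$, i.e. $\mathsf{R}$-type first move inside) with the second via the symmetry already recorded: $W(\eta,k,x+u,\mathsf{V}^k)=W(1-\eta,k,x+u,\mathsf{U}^k)$ would follow from the $\mathsf{L}$/$\mathsf{R}$ symmetry of Lemma~\ref{lem:R^*N} combined with the inductive hypothesis, and then $1-\tfrac{\xi_0 f_1(u)}{\xi_0 f_1(u)+(1-\xi_0)f_2(u)}=\tfrac{(1-\xi_0)f_2(u)}{\xi_0 f_1(u)+(1-\xi_0)f_2(u)}$, which is exactly the posterior appearing after a $Y$-first move — i.e. the $\mathsf{V}^{k+1}$ integrand — matching term by term. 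So the two integrals coincide and the induction closes.

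\textbf{Main obstacle.} The delicate point is bookkeeping the posterior probabilities and verifying that the conditional continuation strategies genuinely are $\mathsf{U}^k$ and $\mathsf{V}^k$ (not some other myopic-after-two-moves strategy), and in particular checking that the symmetry ``exchange the two hypotheses / replace $\eta$ by $1-\eta$'' interacts correctly with ``exchange $\mathsf{U}$ and $\mathsf{V}$'' so that the two displayed integrals become the same integral. Once one is careful that $W(\eta,k,y,\mathsf{U}^k)=W(1-\eta,k,y,\mathsf{V}^k)$ (which is the combination of Lemma~\ref{lem:R^*N}(2) applied to the first move and the inductive hypothesis for the myopic tail) and that $1-\tfrac{\xi_0 f_1(u)}{\xi_0 f_1(u)+(1-\xi_0)f_2(u)}=\tfrac{(1-\xi_0)f_2(u)}{\xi_0 f_1(u)+(1-\xi_0)f_2(u)}$, everything collapses; the rest is the routine Fubini/change-of-variables manipulation already used repeatedly above. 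I would therefore spend the bulk of the write-up on making these identifications precise and keep the integral manipulations terse.
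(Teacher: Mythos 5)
Your inductive step rests on a mis-identification of the continuation strategies. After you fix the first payoff of $\mathsf{U}^{k+1}=\{1,0,g(\xi_2^{\mathsf{U}^{k+1}}),\dots\}$, the remaining play is ``$Y$ first, then myopic'', which is the strategy $\mathsf{R}^{k}$ of Lemma \ref{lem:R^*N}, \emph{not} $\mathsf{V}^{k}=\{0,1,g(\xi_2^{\mathsf{V}^{k}}),\dots\}$, whose second move is forced to be $X$ regardless of the posterior; likewise the continuation of $\mathsf{V}^{k+1}$ is $\mathsf{L}^{k}$, not $\mathsf{U}^{k}$. (Also, the weight in your second displayed integral should be $\xi_0f_2(u)+(1-\xi_0)f_1(u)$, the marginal density of $Y_1$.) Consequently the inductive hypothesis about $\mathsf{U}^{k},\mathsf{V}^{k}$ never genuinely enters, and the identity you lean on, $W(\eta,k,y,\mathsf{V}^{k})=W(1-\eta,k,y,\mathsf{U}^{k})$, does not follow from Lemma \ref{lem:R^*N}(2) combined with the inductive hypothesis: those two facts would yield it only if $W(\cdot,k,y,\mathsf{U}^{k})$ were already known to be symmetric about $\tfrac12$, which is not available (Lemma \ref{lem:R^*N} concerns $\mathsf{L}^{k},\mathsf{R}^{k}$ only).

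More importantly, even with the corrected identifications the one-trial peeling cannot close. Applying Lemma \ref{lem:Exp} and then Lemma \ref{lem:R^*N}(2) turns $W(\xi_0,k+1,x,\mathsf{V}^{k+1})$ into $\int_{\mathbb R} W\bigl(\tfrac{(1-\xi_0)f_1(u)}{\xi_0 f_2(u)+(1-\xi_0)f_1(u)},k,x+u,\mathsf{R}^{k}\bigr)\bigl(\xi_0 f_2(u)+(1-\xi_0)f_1(u)\bigr)\,\ud u$, which is precisely the corresponding expression for $W(1-\xi_0,k+1,x,\mathsf{U}^{k+1})$; note also that $\tfrac{(1-\xi_0)f_2(u)}{\xi_0 f_1(u)+(1-\xi_0)f_2(u)}$ is the $Y$-posterior started from prior $1-\xi_0$, not from $\xi_0$. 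So your ``term-by-term'' matching proves only the hypothesis-swap relation $W(\xi_0,k+1,x,\mathsf{V}^{k+1})=W(1-\xi_0,k+1,x,\mathsf{U}^{k+1})$, not the lemma; bridging the gap would again require the unproved symmetry of $W(\cdot,k+1,x,\mathsf{U}^{k+1})$ about $\tfrac12$. The paper avoids this entirely by peeling off the first \emph{two} trials at once: the joint outcome density $\xi_0 f_1(u)f_2(s)+(1-\xi_0)f_2(u)f_1(s)$ for $\mathsf{U}^{n}$ equals the one for $\mathsf{V}^{n}$ with $(u,s)$ exchanged, the two-step posteriors satisfy $\xi_2^{\mathsf{U}^{n}}(u,s)=\xi_2^{\mathsf{V}^{n}}(s,u)$, and from trial $3$ onward both strategies are myopic with the same posterior, so the two double integrals coincide without any induction. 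Your argument needs to be restructured along these lines, or supplemented with a separate proof of the missing symmetry.
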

\begin{proof}
	For $n=2$ case, the result is obvious. We only need to consider cases where $n\ge 3$.
	Let $\xi_2^{\mathsf{U}^n}(u,s)$ be the posterior probability given the first payoff $u$ and second payoff $s$, when using strategy $\mathsf{U}^n$; and $\xi_2^{\mathsf{V}^n}(s,u)$ be the posterior probability given the first payoff $s$ and second payoff $u$, when using strategy $\mathsf{V}^n$. According to \eqref{posterior}, there is
	\begin{align}\label{eq:XY-1}
		\xi_2^{\mathsf{U}^n}(u,s)=&\frac{\xi_0 f_1(u)f_2(s)}{\xi_0f_1(u)f_2(s)+(1-\xi_0)f_2(u)f_1(s)}
		=\xi_2^{\mathsf{V}^n}(s,u).
	\end{align}

	Let conditional strategy $\mathsf{U}^n[u,s]$ denotes the strategy $\mathsf{U}^n$ given the first payoff $u$ and second payoff $s$, and the conditional strategy $\mathsf{V}^n[s,u]$ denotes the strategy $\mathsf{V}^n$ given the first payoff $s$ and second payoff $u$. Then by \eqref{eq:XY-1}, and the definitions of $\mathsf{U}^n[u,s]$ and $\mathsf{V}^n[s,u]$, we can easily get that these two conditional strategies are the same for $i\ge 3$ trials.

	Using the same techniques in Lemma \ref{lem:Exp}, we obtain
	\begin{align}
		&W(\xi_0,n,x,\mathsf{U}^n)
		=E_{\xi_0}\biggl[\varphi\biggl(x+\sum_{i=1}^{n} Z_i^{\mathsf{U}^n}\biggr)\biggr]\notag\\
		=&\int_{\mathbb R}\int_{\mathbb R} E_{\xi_2^{\mathsf{U}^n}\!(u,s)\!}\biggl[\varphi\biggl(\!x+u+s+\sum_{j=3}^{n} Z_j^{\mathsf{U}^n[u,s]}\!\biggr)\!\biggr]
		\!\left(\xi_0f_{1}(u)f_{2}(s)+(1-\xi_0)f_{2}(u)f_{1}(s)\right)\ud s\ud u,\\
		\intertext{and}
		&W(\xi_0,n,x,\mathsf{V}^n)
		=E_{\xi_0}\biggl[\varphi\biggl(x+\sum_{i=1}^{n} Z_i^{\mathsf{V}^n}\biggr)\biggr]\notag\\
		=&\int_{\mathbb R}\int_{\mathbb R} E_{\xi_2^{\mathsf{V}^n}\!(s,u)\!}\biggl[\varphi\biggl(\!x+s+u+\sum_{j=3}^{n} Z_j^{\mathsf{V}^n[s,u]}\!\biggr)\!\biggr]
		\!\left(\xi_0f_{2}(s)f_{1}(u)+(1-\xi_0)f_{1}(s)f_{2}(u)\right)\ud u\ud s.
	\end{align}
	Then the desired result is obtained by using \eqref{eq:XY-1} and the fact that $\mathsf{U}^n[u,s]=\mathsf{V}^n[s,u]$ for $i\ge 3$ trials.
\end{proof}

For each $n\in\mathbb{Z}^+$, let $\mathsf{L}^n$ and $\mathsf{R}^n$ be the strategies defined in Lemma \ref{lem:R^*N}. For $x\in\mathbb{R}$, $\xi_0\in[0,1]$, define the difference of expected utilities by
\begin{align*}
	\Delta_n(x,\xi_0):=&W(\xi_0,n,x,\mathsf{L}^n)-W(\xi_0,n,x,\mathsf{R}^n)\\
	=&E_{\xi_0}\biggl[\varphi\biggl(x+\sum_{i=1}^{n} Z_i^{\mathsf{L}^n}\biggr)\biggr] - E_{\xi_0}\biggl[\varphi\biggl(x+\sum_{i=1}^{n} Z_i^{\mathsf{R}^n}\biggr)\biggr].
\end{align*}
By Lemma \ref{lem:R^*N}, we can obtain that $\Delta_n(x,\xi_0)=-\Delta_n(x,1-\xi_0)$ and $\Delta_n(x,0.5)=0$. 

In the case that $\varphi(x)=x$, there is an important recurrence formula for $\Delta_n(x,\xi_0)$, see equation (12), (13) and (14) in \cite{Feldman1962} and equation (7.1.1) in \cite{Berry1985}. The following lemma shows that this recurrence formula still holds for a general utility function.
\begin{lem}
	\label{lem:DeltaNtoN_1}
	For $n\ge 2$, and any $x\in\mathbb{R}$, $\xi_0\in[0,1]$, there is
	\begin{equation}
	\begin{aligned}
		&\Delta_n(x,\xi_0)\\
		=&\int_{\mathbb R} I_{\bigl\{\xi_1^X(u)\ge 0.5\bigr\}}\Delta_{n-1}\bigl(x+u,\xi_1^X(u)\bigr)\bigl(\xi_0f_1(u)+(1-\xi_0)f_2(u)\bigr)\ud u,\\
		&+\int_{\mathbb R} I_{\bigl\{\xi_1^Y(u)<0.5\bigr\}}\Delta_{n-1}\bigl(x +u,\xi_1^Y(u)\bigr)\bigl(\xi_0f_2(u) +(1-\xi_0)f_1(u) \bigr)\ud u,
	\end{aligned}
	\end{equation}
	where $\xi_1^X(u)=\frac{\xi_0 f_1(u) }{\xi_0 f_1(u) +(1-\xi_0)f_2(u)}$ and $\xi_1^Y(u)=\frac{\xi_0 f_2(u) }{\xi_0 f_2(u) +(1-\xi_0)f_1(u)}$.
\end{lem}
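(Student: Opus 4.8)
The plan is to condition on the outcome of the first trial to reduce $\Delta_n$ to $(n-1)$-trial quantities, and then to cancel the two ``cross terms'' that obstruct a clean recursion by invoking the exchange identity of Lemma~\ref{lem:XY=YX}.

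First I would apply Lemma~\ref{lem:Exp} (in the integral form of the Remark following it), together with Lemma~\ref{lem:NtoN_1}, to each of $\mathsf{L}^n$ and $\mathsf{R}^n$. Since $\mathsf{L}^n$ plays the $X$-arm in trial~$1$ and thereafter follows the myopic rule, the conditional strategy obtained by fixing the first payoff to $u$ is — after the re-indexing of trials $2,\dots,n$ to $1,\dots,n-1$ already used in the proof of Lemma~\ref{lem:R^*N} — exactly $\mathsf{M}^{n-1}$ for the $(\xi_1^X(u),n-1,x+u)$-bandit, while the first $X$-payoff has mixture density $\xi_0f_1(u)+(1-\xi_0)f_2(u)$. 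Likewise for $\mathsf{R}^n$ with $\xi_1^Y$ and mixture density $\xi_0f_2(u)+(1-\xi_0)f_1(u)$. This gives
\begin{align*}
	W(\xi_0,n,x,\mathsf{L}^n)&=\int_{\mathbb R} W\bigl(\xi_1^X(u),n-1,x+u,\mathsf{M}^{n-1}\bigr)\bigl(\xi_0f_1(u)+(1-\xi_0)f_2(u)\bigr)\,\ud u,\\
	W(\xi_0,n,x,\mathsf{R}^n)&=\int_{\mathbb R} W\bigl(\xi_1^Y(u),n-1,x+u,\mathsf{M}^{n-1}\bigr)\bigl(\xi_0f_2(u)+(1-\xi_0)f_1(u)\bigr)\,\ud u.
\end{align*}
Then I would substitute \eqref{eq:Wcase}, i.e.\ $W(\cdot,n-1,\cdot,\mathsf{M}^{n-1})=I_{\{\cdot\ge 0.5\}}W(\cdot,n-1,\cdot,\mathsf{L}^{n-1})+I_{\{\cdot<0.5\}}W(\cdot,n-1,\cdot,\mathsf{R}^{n-1})$, into both integrands and subtract.

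After collecting terms, the part of $\Delta_n(x,\xi_0)$ carrying $I_{\{\xi_1^X(u)\ge 0.5\}}$ and $I_{\{\xi_1^Y(u)<0.5\}}$ is exactly the pair of integrals in the statement, once one writes $\Delta_{n-1}=W(\cdot,\mathsf{L}^{n-1})-W(\cdot,\mathsf{R}^{n-1})$; what is left over is the ``wrong-pairing'' quantity
\begin{align*}
	\int_{\mathbb R} W\bigl(\xi_1^X(u),n-1,x+u,\mathsf{R}^{n-1}\bigr)\bigl(\xi_0f_1(u)+(1-\xi_0)f_2(u)\bigr)\,\ud u-\int_{\mathbb R} W\bigl(\xi_1^Y(u),n-1,x+u,\mathsf{L}^{n-1}\bigr)\bigl(\xi_0f_2(u)+(1-\xi_0)f_1(u)\bigr)\,\ud u .
\end{align*}
Reading Lemma~\ref{lem:Exp} in reverse, the first of these integrals is the expected utility of ``play $X$ in trial~$1$, then follow $\mathsf{R}^{n-1}$'' — that is, play $X$, then $Y$, then the myopic rule — which is $W(\xi_0,n,x,\mathsf{U}^n)$; the second is ``play $Y$, then follow $\mathsf{L}^{n-1}$'', which is $W(\xi_0,n,x,\mathsf{V}^n)$. (One checks that the two-trial posteriors match, e.g.\ $\frac{\xi_1^X(u)f_2(s)}{\xi_1^X(u)f_2(s)+(1-\xi_1^X(u))f_1(s)}=\xi_2^{\mathsf{U}^n}(u,s)$.) By Lemma~\ref{lem:XY=YX}, $W(\xi_0,n,x,\mathsf{U}^n)=W(\xi_0,n,x,\mathsf{V}^n)$, so the leftover quantity vanishes and the claimed recurrence follows.

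The main obstacle is purely bookkeeping: one must verify carefully that, after fixing the first payoff, ``$X$ first, then myopic from the updated posterior'' coincides with $\mathsf{M}^{n-1}$, and that ``$X$ first, then $\mathsf{R}^{n-1}$'' coincides with $\mathsf{U}^n$ (and dually for $\mathsf{V}^n$), in the sense of producing identical expected utilities under the re-indexing of strategies. This is the same routine verification already carried out in the proofs of Lemmas~\ref{lem:Exp}, \ref{lem:NtoN_1} and \ref{lem:R^*N}, so it introduces no new difficulty; the genuine ingredient of the proof is the cancellation of the cross terms via Lemma~\ref{lem:XY=YX}, without which no closed recursion for $\Delta_n$ would exist.
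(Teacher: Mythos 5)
Your argument is correct and is essentially the paper's own proof: the paper likewise conditions on the first payoff, identifies the continuations with $\mathsf{L}^{n-1}$, $\mathsf{R}^{n-1}$, $\mathsf{M}^{n-1}$, and cancels the cross terms via Lemma~\ref{lem:XY=YX}, only organized as computing $W(\xi_0,n,x,\mathsf{L}^n)-W(\xi_0,n,x,\mathsf{U}^n)$ and $W(\xi_0,n,x,\mathsf{V}^n)-W(\xi_0,n,x,\mathsf{R}^n)$ and adding, which is exactly your regrouping in which the leftover equals $W(\xi_0,n,x,\mathsf{U}^n)-W(\xi_0,n,x,\mathsf{V}^n)=0$.
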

\begin{proof}
	Consider strategies $\mathsf{U}^n,\mathsf{V}^n$ defined in Lemma \ref{lem:XY=YX}. Using Lemma \ref{lem:Exp} and Lemma \ref{lem:NtoN_1}, we can make the following two differences
	\begin{equation}
		\begin{aligned}
			&W(\xi_0,n,x,\mathsf{L}^n)-W(\xi_0,n,x,\mathsf{U}^n)\\
			=&E_{\xi_0}\biggl[\varphi\biggl(x+\sum_{i=1}^{n} Z_i^{\mathsf{L}^n}\biggr)\biggr] - E_{\xi_0}\biggl[\varphi\biggl(x+\sum_{i=1}^{n} Z_i^{\mathsf{U}^n}\biggr)\biggr]\\
			=&\int_{\mathbb R} I_{\bigl\{\xi_1^X(u)\ge 0.5\bigr\}}\Delta_{n-1}\bigl(x+u,\xi_1^X(u)\bigr)\bigl(\xi_0f_1(u)+(1-\xi_0)f_2(u)\bigr)\ud u,
		\end{aligned}
	\end{equation}
	\begin{equation}
		\begin{aligned}
			&W(\xi_0,n,x,\mathsf{V}^n)-W(\xi_0,n,x,\mathsf{R}^n)\\
			=&E_{\xi_0}\biggl[\varphi\biggl(x+\sum_{i=1}^{n} Z_i^{\mathsf{V}^n}\biggr)\biggr] - E_{\xi_0}\biggl[\varphi\biggl(x+\sum_{i=1}^{n} Z_i^{\mathsf{R}^n}\biggr)\biggr]\\
			=&\int_{\mathbb R} I_{\bigl\{\xi_1^Y(u)<0.5\bigr\}}\Delta_{n-1}\bigl(x +u,\xi_1^Y(u)\bigr)\bigl(\xi_0f_2(u) +(1-\xi_0)f_1(u) \bigr)\ud u.
		\end{aligned}
	\end{equation}
	The computational details are similar to the proofs of Lemma \ref{lem:R^*N} and Lemma \ref{lem:XY=YX}, so we omit them. 
	The desired formula is obtained by adding the above two equations and using Lemma \ref{lem:XY=YX}.
\end{proof}

The key to achieve our main theorem is to prove that for any fixed $x\in\mathbb{R}$ and $n\in\mathbb{Z}^+$, the above difference $\Delta_n(x,\xi_0)$ is an increasing function of $\xi_0$. To prove this assertion, we use a method similar to that used in Rodman \cite{Rodman1978}.

Define functions $D_n(x,t_X,t_Y), n=1,2,\dots$, for every tuple $(x,t_X,t_Y)$ of numbers, such that $x\in\mathbb{R}$ and $t_X,t_Y\ge 0$:
\begin{align}
	D_n(x,t_X,t_Y)= \begin{cases}
		(t_X+t_Y)\Delta_n(x,\frac{t_X}{t_X+t_Y})\quad &\text{if $t_X+t_Y>0$;}\\
		0&\text{otherwise.}
	\end{cases}
\end{align}
From this definition we obtain immediately that $D_n(x,t_X,t_Y)=-D_n(x,t_Y,t_X)$ and $D_n(x,t_X,t_Y)=0$ if $t_X=t_Y$.

\begin{lem}
	\label{lem:DInc}
	Assume that Condition \eqref{conditionI} holds, then $D_n(x,t_X,t_Y)$ is an increasing function of $t_X$, when $x,t_Y$ are kept fixed.
\end{lem}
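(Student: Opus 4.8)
The plan is to prove the statement by induction on $n$, using the recursion for $\Delta_n$ established in Lemma~\ref{lem:DeltaNtoN_1} rewritten in terms of the homogenized functions $D_n$. The base case $n=1$ is a direct computation: $D_1(x,t_X,t_Y)=(t_X+t_Y)\Delta_1(x,\tfrac{t_X}{t_X+t_Y})$, and $\Delta_1(x,\xi_0)=W(\xi_0,1,x,\mathsf{L}^1)-W(\xi_0,1,x,\mathsf{R}^1)=E_{\xi_0}[\varphi(x+X_1)]-E_{\xi_0}[\varphi(x+Y_1)]$, which expands (using the definition of $E_{\xi_0}$) to $(2\xi_0-1)\bigl(E_1[\varphi(x+X_1)]-E_1[\varphi(x+Y_1)]\bigr)$ after one uses that under $H_2$ the roles of $F_1,F_2$ are swapped. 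Hence $D_1(x,t_X,t_Y)=(t_X-t_Y)\bigl(E_1[\varphi(x+X_1)]-E_1[\varphi(x+Y_1)]\bigr)$, and Condition~\eqref{conditionI} makes the bracket nonnegative, so $D_1$ is (weakly) increasing in $t_X$ — in fact affine with nonnegative slope.

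The induction step is the heart of the matter. First I would rewrite the recursion of Lemma~\ref{lem:DeltaNtoN_1} in $D$-form. Multiplying through by $t_X+t_Y$ and substituting $\xi_0=\tfrac{t_X}{t_X+t_Y}$, the weight $\xi_0 f_1(u)+(1-\xi_0)f_2(u)$ becomes $\tfrac{t_Xf_1(u)+t_Yf_2(u)}{t_X+t_Y}$ and the posterior $\xi_1^X(u)=\tfrac{\xi_0 f_1(u)}{\xi_0 f_1(u)+(1-\xi_0)f_2(u)}$ has numerator proportional to $t_Xf_1(u)$ and denominator proportional to $t_Xf_1(u)+t_Yf_2(u)$; so the first integrand term $(t_X+t_Y)\cdot\tfrac{t_Xf_1(u)+t_Yf_2(u)}{t_X+t_Y}\cdot\Delta_{n-1}(x+u,\xi_1^X(u))$ collapses exactly to $D_{n-1}(x+u,\,t_Xf_1(u),\,t_Yf_2(u))$, and likewise the second term to $D_{n-1}(x+u,\,t_Xf_2(u),\,t_Yf_1(u))$, while the indicators $\{\xi_1^X(u)\ge 0.5\}$ and $\{\xi_1^Y(u)<0.5\}$ become $\{t_Xf_1(u)\ge t_Yf_2(u)\}$ and $\{t_Xf_2(u)< t_Yf_1(u)\}$. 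Thus
\begin{align*}
	D_n(x,t_X,t_Y)=&\int_{\mathbb R} I_{\{t_Xf_1(u)\ge t_Yf_2(u)\}}\,D_{n-1}\bigl(x+u,\,t_Xf_1(u),\,t_Yf_2(u)\bigr)\,\ud u\\
	&+\int_{\mathbb R} I_{\{t_Xf_2(u)< t_Yf_1(u)\}}\,D_{n-1}\bigl(x+u,\,t_Xf_2(u),\,t_Yf_1(u)\bigr)\,\ud u.
\end{align*}
Now fix $u$ and $t_Y$ and consider how each integrand behaves as $t_X$ increases. In the first term, as $t_X$ grows, $t_Xf_1(u)$ grows, so by the induction hypothesis $D_{n-1}(x+u,t_Xf_1(u),t_Yf_2(u))$ is nondecreasing in $t_X$; moreover the indicator set $\{t_Xf_1(u)\ge t_Yf_2(u)\}$ is an increasing set in $t_X$, and on the boundary where the indicator switches on, the value of $D_{n-1}$ is $D_{n-1}(x+u,t_Yf_2(u),t_Yf_2(u))=0$ since its two mass-arguments are equal — so the integrand is continuous across the jump and the ``turning on'' of the indicator contributes nothing bad. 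Hence the first integrand is nondecreasing in $t_X$. For the second term, $t_Xf_2(u)$ also grows with $t_X$, so again $D_{n-1}(x+u,t_Xf_2(u),t_Yf_1(u))$ is nondecreasing in $t_X$ by the hypothesis; but here the indicator set $\{t_Xf_2(u)< t_Yf_1(u)\}$ is a \emph{decreasing} set in $t_X$, so we are integrating a nondecreasing function over a shrinking set — and the saving grace is that at the point where the indicator switches off we have $t_Xf_2(u)=t_Yf_1(u)$, so $D_{n-1}(x+u,t_Xf_2(u),t_Yf_1(u))=0$ there, i.e. the quantity being discarded is exactly zero. So removing it costs nothing and the second integrand is also nondecreasing in $t_X$.

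Integrating the pointwise monotonicity over $u$ gives that $D_n(x,t_X,t_Y)$ is increasing in $t_X$, completing the induction. The main obstacle — and the reason one cannot just differentiate blindly — is precisely the interaction between the moving indicators and the induction hypothesis: a priori, turning the first indicator on adds a term and turning the second off subtracts one, and monotonicity could fail. The resolution is the observation recorded right after the definition of $D_n$, namely $D_{n-1}(x,t,t)=0$, which makes every indicator switch happen exactly at a zero of the integrand, so the integrand is continuous in $t_X$ and its one-sided monotonicity on each side of the switch glues together. I would also remark that one should carry the (weak) monotonicity statement as stated rather than strict monotonicity, since $D_1$ is only affine; if one wants the strict conclusion needed later it comes for free whenever the bracket $E_1[\varphi(x+X_1)]-E_1[\varphi(x+Y_1)]$ is strictly positive, but the weak version suffices to run the induction and is all that Lemma~\ref{lem:DInc} claims. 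A minor technical point to mention in passing is integrability/measurability of the integrands in $u$, which follows from the measurability of $\varphi$ and $f_1,f_2$ exactly as in Lemma~\ref{lem:Exp}, so no new hypotheses are needed.
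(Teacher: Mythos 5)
Your proof is correct and follows essentially the same route as the paper: induction on $n$, with the base case from Condition \eqref{conditionI} and the induction step via the homogenized recursion for $D_n$ obtained from Lemma \ref{lem:DeltaNtoN_1}. The only cosmetic difference is that the paper handles the moving indicators by noting the sign of $D_k$ on each indicator set ($\ge 0$ on the first, $\le 0$ on the second), whereas you phrase the same fact as the integrand vanishing at the switching boundary ($D_{n-1}(x,t,t)=0$) and glue the one-sided monotonicities; these are equivalent.
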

\begin{proof}
	By induction. When $n=1$,
	\begin{equation}
	D_1(x,t_X,t_Y)=(t_X-t_Y)\int_{\mathbb R} \varphi(x+u)\bigl(f_1(u)-f_2(u)\bigr)\ud u
	\end{equation}
	is clearly an increasing function of $t_X$ when $x,t_Y$ are kept fixed.

	Now suppose Lemma \ref{lem:DInc} is proven for $n=k$. From Lemma \ref{lem:DeltaNtoN_1} we know that
	\begin{equation}
		\begin{aligned}
			&D_{k+1}(x,t_X,t_Y)\\
			=&\int_{\mathbb R} I_{\{t_X f_1(u)\ge t_Y f_2(u)\}}D_k(x+u,t_X f_1(u), t_Y f_2(u))\ud u\\
			+&\int_{\mathbb R} I_{\{t_X f_2(u)< t_Y f_1(u)\}}D_k(x+u,t_X f_2(u), t_Y f_1(u))\ud u.
		\end{aligned}
	\end{equation}
When $u,x,t_Y$ are fixed, $D_k(x+u,t_X f_1(u), t_Y f_2(u))$ and $D_k(x+u,t_X f_2(u), t_Y f_1(u))$ are increasing functions of $t_X$. And $D_k(x+u,t_X f_1(u), t_Y f_2(u))\ge 0$ when $t_X f_1(u)\ge t_Y f_2(u)$, $D_k(x+u,t_X f_2(u), t_Y f_1(u))\le 0$ when $t_X f_2(u)< t_Y f_1(u)$. So the two integrands in the above two integrals are both increasing functions of $t_X$.

	Hence we obtain that $D_{k+1}(x,t_X,t_Y)$ is an increasing function of $t_X$ when $x,t_Y$ are fixed, and complete the proof.
\end{proof}

Now let $t_X=\xi_0$, $t_Y=1-\xi_0$, then $D_n(x,t_X,t_Y)=\Delta_n(x,\xi_0)$. By Lemma \ref{lem:DInc} and $D_n(x,t_X,t_Y)=-D_n(x,t_Y,t_X)$, we get the desired fact.
\begin{cor}
	\label{cor:DeltaInc}
	For any fixed $x\in\mathbb{R}$ and $n\in\mathbb{Z}^+$, $\Delta_n(x,\xi_0)$ is an increasing function of $\xi_0$.
\end{cor}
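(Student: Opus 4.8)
The plan is to read the monotonicity of $\xi_0\mapsto\Delta_n(x,\xi_0)$ off Lemma~\ref{lem:DInc} together with the antisymmetry $D_n(x,t_X,t_Y)=-D_n(x,t_Y,t_X)$, with no further induction needed. The bridge is the identity already noted above the statement: taking $t_X=\xi_0$ and $t_Y=1-\xi_0$ we have $t_X+t_Y=1>0$, so by the definition of $D_n$ one gets $D_n(x,\xi_0,1-\xi_0)=\Delta_n(x,\xi_0)$ for every $\xi_0\in[0,1]$. It therefore suffices to track how $D_n(x,\cdot,\cdot)$ moves as its two arguments run along the segment $\{(\xi_0,1-\xi_0):\xi_0\in[0,1]\}$.

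First I would upgrade Lemma~\ref{lem:DInc} to a statement in both variables: it gives that $D_n(x,t_X,t_Y)$ is increasing in $t_X$ for fixed $x,t_Y$, and plugging this into $D_n(x,t_X,t_Y)=-D_n(x,t_Y,t_X)$ (with $t_Y$ now sitting in the first slot on the right) shows that $D_n(x,t_X,t_Y)$ is \emph{decreasing} in $t_Y$ for fixed $x,t_X$. Then I would chain: fixing $x,n$ and taking $\xi_0<\xi_0'$ in $[0,1]$ (the case $\xi_0=\xi_0'$ being trivial, and $\xi_0'>0$), every triple below lies in the domain of $D_n$, and raising the first coordinate and then lowering the second gives
\[
\Delta_n(x,\xi_0)=D_n(x,\xi_0,1-\xi_0)\le D_n(x,\xi_0',1-\xi_0)\le D_n(x,\xi_0',1-\xi_0')=\Delta_n(x,\xi_0'),
\]
where the first inequality is monotonicity in $t_X$ (Lemma~\ref{lem:DInc}, with $t_Y=1-\xi_0$ fixed) and the second is the monotonicity in $t_Y$ just established (with $t_X=\xi_0'$ fixed and $1-\xi_0\ge 1-\xi_0'$). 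This is exactly the assertion of the corollary.

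There is essentially no obstacle left at this stage: the substantive content — the induction showing that $D_n$ is monotone in its first coordinate — was carried out in Lemma~\ref{lem:DInc}, and what remains is only the bookkeeping of turning ``increasing in $t_X$, $t_Y$ fixed'' into ``increasing along the probability simplex.'' The one point worth a word of care is that $D_n$ is defined by cases and the formula $D_n(x,t_X,t_Y)=(t_X+t_Y)\Delta_n\!\bigl(x,\tfrac{t_X}{t_X+t_Y}\bigr)$ is legitimate only when $t_X+t_Y>0$; this holds at each of the three points in the displayed chain. (Alternatively one could exploit the positive $1$-homogeneity of $D_n$ in $(t_X,t_Y)$ and rescale $(\xi_0',1-\xi_0')$, but the two-step comparison above is the shortest route.)
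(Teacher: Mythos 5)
Your argument is correct and is exactly the paper's route: the paper also sets $t_X=\xi_0$, $t_Y=1-\xi_0$ and deduces the monotonicity of $\Delta_n(x,\cdot)$ from Lemma~\ref{lem:DInc} together with the antisymmetry $D_n(x,t_X,t_Y)=-D_n(x,t_Y,t_X)$. You merely spell out the two-step comparison (raise $t_X$, then lower $t_Y$) that the paper leaves implicit, which is a faithful completion of the same proof.
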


Now we are ready to prove Theorem \ref{thm:main}.
\begin{proof}[Proof of Theorem \ref{thm:main}]
	Firstly, we assume that condition \eqref{conditionI} holds and prove the optimality of $\mathsf{M}^n$.
	This can be easily obtained by Theorem \ref{thm:R*} and Corollary \ref{cor:DeltaInc}. We now use mathematical induction.
	
	When $n=1$, by Corollary \ref{cor:DeltaInc},
	\begin{align}
	\Delta_{1}(x,\xi_0)=E_{\xi_0}\left[\varphi\left(x+X_{1}\right)\right]
	-E_{\xi_{0}}\left[\varphi\left(x+Y_{1}\right)\right]
	\end{align}
	is an increasing function of $\xi_0$ for any fixed $x$. Since $\Delta_{1}(x,\frac{1}{2})=0$, then the optimal strategy should chooses $X$ first if $\xi_0\ge \frac{1}{2}$, and chooses $Y$ first if $\xi<\frac{1}{2}$. This means that $\mathsf{M}^1$ is the optimal strategy of the $(\xi_0,1,x)$-bandit, for any $x\in\mathbb{R}$ and $\xi_0\in[0,1]$.
	
	Assume that for fixed $k\ge 1$, the myopic strategy $\mathsf{M}^k$ is optimal for $(\xi_0,k,x)$-bandits, for any $x\in\mathbb{R}$ and $\xi_0\in[0,1]$.

	Consider now a bandit problem with $k+1$ trials. Note that by \eqref{eq:Wcase}, there is
	\begin{align*}
		W(\xi_0,k+1,x,\mathsf{M}^{k+1})&=\begin{cases}
			W(\xi_0,k+1,x,\mathsf{L}^{k+1}),\quad\text{if $\xi_{0}\geq\frac{1}{2}$,}\\
			W(\xi_0,k+1,x,\mathsf{R}^{k+1}),\quad\text{otherwise.}
		\end{cases}
	\end{align*}
	And by \eqref{eq:LeqR_4} and \eqref{eq:LeqR_5}, we know that
	\begin{align}
		W(\xi_0,k+1,x,\mathsf{L}^{k+1})
		=&E_{\xi_0}\biggl[\varphi\biggl(x+\sum_{i=1}^{k+1} Z_i^{\mathsf{L}^{k+1}}\biggr)\biggr]\notag\\
		=& E_{\xi_{0}}\biggl[W\Bigl(\frac{\xi_0f_1(X_1)}{\xi_0f_1(X_1)+(1-\xi_0)f_2(X_1)},k,x+X_1,\mathsf{M}^{k}\Bigr)\biggr],\\
		W(\xi_0,k+1,x,\mathsf{R}^{k+1})
		=&E_{\xi_0}\biggl[\varphi\biggl(x+\sum_{i=1}^{k+1} Z_i^{\mathsf{R}^{k+1}}\biggr)\biggr]\notag\\
		=& E_{\xi_{0}}\biggl[W\Bigl(\frac{\xi_0f_2(Y_1)}{\xi_0f_2(Y_1)+(1-\xi_0)f_1(Y_1)},k,x+Y_1,\mathsf{M}^{k}\Bigr)\biggr].
	\end{align}

	By Corollary \ref{cor:DeltaInc}, $\Delta_{k+1}(x,\xi_0)$ is an increasing function of $\xi_0$, $\Delta_{k+1}(x,\frac{1}{2})=0$. And by the induction hypothesis, $\mathsf{M}^k$ is the optimal strategy of $(\frac{\xi_0 f_1(u) }{\xi_0 f_1(u) +(1-\xi_0)f_2(u)},k,x+u)$-bandits and $(\frac{\xi_0 f_2(u) }{\xi_0 f_2(u) +(1-\xi_0)f_1(u)},k,x+u)$-bandits, for $\forall u\in\mathbb{R}$. Then we have
	\begin{equation}
	\begin{aligned}
	&W(\xi_0,k+1,x,\mathsf{M}^{k+1})\\
		=&\max\left\{W(\xi_0,k+1,x,\mathsf{L}^{k+1}),W(\xi_0,k+1,x,\mathsf{R}^{k+1})\right\}\\
		=&\max\left\{E_{\xi_{0}}\biggl[V\biggl(\frac{\xi_0f_1(X_1)}{\xi_0f_1(X_1)+(1-\xi_0)f_2(X_1)},k,x+X_1\biggr)\biggr],\right.\\
		&\left.	\ \ \ \ \ \ \ \ \ \ \ \ \ \ \ E_{\xi_{0}}\biggl[V\biggl(\frac{\xi_0f_2(Y_1)}{\xi_0f_2(Y_1)+(1-\xi_0)f_1(Y_1)},k,x+Y_1\biggr)\biggr]\right\}.
	\end{aligned}
	\end{equation}
	Therefore, by Theorem \ref{thm:R*},
	\begin{align}
	W(\xi_0,k+1,x,\mathsf{M}^{k+1})=V(\xi_0,k+1,x).
	\end{align}
	Hence $\mathsf{M}^{k+1}$ is the optimal strategy of $(\xi_0,k+1,x)$-bandits, for any $x\in\mathbb{R}$ and $\xi_0\in[0,1]$. The first part of the main theorem is proved.
	
	Now we prove that, if the strategy $\mathsf{M}^n$ is optimal for $(\xi_0,n,x)$-bandits, for any integer $n\ge 1$, $\forall \xi_0\in[0,1]$, $\forall x\in \mathbb{R}$, then condition \eqref{conditionI} holds.
	
	Actually, we only need to consider the case when $n=1$. For any fixed $x\in\mathbb{R}$, we have
	\begin{align}
		E_{\xi_0}[\varphi(x+X_1)]&=\xi_0\int_{\mathbb R}\varphi(x+u)f_1(u)\ud u+(1-\xi_0)\int_{\mathbb R} \varphi(x+u)f_2(u)\ud u,\\
		E_{\xi_0}[\varphi(x+Y_1)]&=\xi_0\int_{\mathbb R}\varphi(x+u)f_2(u)\ud u+(1-\xi_0)\int_{\mathbb R} \varphi(x+u)f_1(u)\ud u.
	\end{align}
	
	Since the strategy $\mathsf{M}^1$ which chooses $X$ if and only if $\xi_0\ge \frac{1}{2}$ is the optimal strategy, we know that
	\begin{align}
	E_{\xi_0}[\varphi(x+X_1)]-E_{\xi_0}[\varphi(x+Y_1)]\ge 0,~\text{if }\xi_0\ge \frac{1}{2}.
	\end{align}
	Then for any fixed $x\in\mathbb{R}$, we have
	\begin{align}
		(2\xi_0-1)\Bigl[ \int_{\mathbb R}\varphi(x+u)f_1(u)\ud u-\int_{\mathbb R}\varphi(x+u)f_2(u)\ud u\Bigr]\ge 0,~\text{if }\xi_0\ge \frac{1}{2},
	\end{align}
	which leads to
	\begin{align}
		\int_{\mathbb R}\varphi(x+u)f_1(u)\ud u-\int_{\mathbb R}\varphi(x+u)f_2(u)\ud u\ge 0.
	\end{align}
	This is clearly condition (I).
\end{proof}

Theorem \ref{thm:main} gives a reasonable condition on $\varphi$ that is necessary and sufficient for myopic strategy $\mathsf{M}^n$ being the optimal strategy.
With Theorem \ref{thm:main} in hand, we can immediately obtain the following two corollaries. The first result applied Theorem \ref{thm:main} on the $\varphi(x)=x$ case, and it is the combination of the work in Feldman \cite{Feldman1962} and Kelley \cite{Kelley1974}. The second corollary answers Nouiehed and Ross's conjecture for two-armed case.
\begin{cor}
	Myopic strategy $\mathsf{M}^n$ is optimal for $(\xi_0,n,x)$-bandits with utility function $\varphi(x)=x$, for any integer $n\ge 1$, $\forall \xi_0\in[0,1]$, $\forall x\in \mathbb{R}$, if and only if
\begin{align}
	E_P[X_1\vert H_1]\ge E_P[Y_1\vert H_1].
\end{align}
\end{cor}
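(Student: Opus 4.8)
The plan is to specialize Theorem~\ref{thm:main} to the linear utility $\varphi(x)=x$. First I would observe that for this choice, Condition~\eqref{conditionI} reads $E_1[u+X_1]\ge E_1[u+Y_1]$ for all $u\in\mathbb{R}$. By linearity of expectation, $E_1[u+X_1]=u+E_1[X_1]$ and $E_1[u+Y_1]=u+E_1[Y_1]$, so the parameter $u$ cancels on both sides and the whole family of inequalities collapses to the single inequality $E_1[X_1]\ge E_1[Y_1]$. Recalling the notational shorthand $E_1[\cdot]=E_P[\cdot\mid H_1]$ fixed earlier, this is exactly $E_P[X_1\mid H_1]\ge E_P[Y_1\mid H_1]$.

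The two directions of the stated equivalence then follow immediately from the two directions of Theorem~\ref{thm:main}. If $E_P[X_1\mid H_1]\ge E_P[Y_1\mid H_1]$, then Condition~\eqref{conditionI} holds for $\varphi(x)=x$ by the simplification above, so by Theorem~\ref{thm:main} the myopic strategy $\mathsf{M}^n$ is optimal for the $(\xi_0,n,x)$-bandit for every $n\ge 1$, $\xi_0\in[0,1]$, $x\in\mathbb{R}$. Conversely, if $\mathsf{M}^n$ is optimal for all these bandits, then Theorem~\ref{thm:main} gives Condition~\eqref{conditionI} for $\varphi(x)=x$, which is again the same as $E_P[X_1\mid H_1]\ge E_P[Y_1\mid H_1]$.

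There is no genuine obstacle here: the only point requiring care is the elementary reduction of Condition~\eqref{conditionI} under linearity of $\varphi$, together with noting that the integrability of $X_1$ and $Y_1$ under each hypothesis (which makes all the relevant expectations well-defined) is exactly what is needed for $\varphi(x)=x$ to be an admissible utility in Theorem~\ref{thm:main}. I would close by remarking that the criterion obtained coincides with Feldman's Condition~\eqref{eq:classic_condition}, so this corollary recovers the sufficiency part from Feldman~\cite{Feldman1962} and the necessity part from Kelley~\cite{Kelley1974} in a single statement.
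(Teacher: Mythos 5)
Your proposal is correct and matches the paper's own (essentially immediate) argument: the paper likewise obtains this corollary by specializing Theorem~\ref{thm:main} to $\varphi(x)=x$, noting, as in the remark following that theorem, that Condition~\eqref{conditionI} then collapses to $E_1[X_1]\ge E_1[Y_1]$, i.e. $E_P[X_1\mid H_1]\ge E_P[Y_1\mid H_1]$. Nothing further is needed.
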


\begin{cor}[Nouiehed and Ross's conjecture]
	Consider the two-armed Bernoulli bandits. Let the distributions $F_1$ and $F_2$ be Bernoulli($\alpha$) and Bernoulli($\beta$), and $\alpha>\beta$. Under hypothesis $H_1$, experiment $X$ obeys $Bernoulli(\alpha)$, while $Y$ obeys $Bernoulli(\beta)$; under hypothesis $H_2$, $X$ obeys $Bernoulli(\beta)$, $Y$ obeys $Bernoulli(\alpha)$. Then for any fixed positive integers $k$ and $n$, Feldman's strategy $\mathsf{M}^n$ maximizes $P(S_n\ge k)=E_P[I_A]$, where $I_A$ is the indicator function of set $A=\{S_n\ge k\}$.
\end{cor}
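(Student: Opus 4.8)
The plan is to deduce the corollary directly from Theorem \ref{thm:main} by choosing the right utility function and verifying that Condition \eqref{conditionI} holds in this setting. Specifically, for a fixed positive integer $k$, I would take $\varphi(x) = I_{[k,+\infty)}(x)$ and initial fund $x = 0$. Then for any strategy $\theta \in \Theta_n$, the quantity $S_n = \sum_{i=1}^n Z_i^\theta$ is precisely the total number of wins, and
\begin{align*}
	W(\xi_0, n, 0, \theta) = E_P\bigl[\varphi(S_n)\bigr] = E_P\bigl[I_{[k,+\infty)}(S_n)\bigr] = P(S_n \ge k).
\end{align*}
Hence maximizing $W(\xi_0,n,0,\theta)$ over $\theta \in \Theta_n$ is exactly the problem of maximizing $P(S_n \ge k)$, and Theorem \ref{thm:main} tells us that $\mathsf{M}^n$ achieves this supremum as soon as Condition \eqref{conditionI} is satisfied.

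The remaining task is to check Condition \eqref{conditionI} for this $\varphi$ and the Bernoulli distributions. By the remark following Theorem \ref{thm:main}, when $F_1 = \mathrm{Bernoulli}(\alpha)$ and $F_2 = \mathrm{Bernoulli}(\beta)$, Condition \eqref{conditionI} reduces to
\begin{align*}
	\bigl(\varphi(u+1) - \varphi(u)\bigr)(\alpha - \beta) \ge 0 \quad \text{for all } u \in \mathbb{R}.
\end{align*}
Since $\varphi(x) = I_{[k,+\infty)}(x)$ is a nondecreasing function of $x$, we have $\varphi(u+1) - \varphi(u) \ge 0$ for every $u$; combined with the hypothesis $\alpha > \beta$, the product is nonnegative, so Condition \eqref{conditionI} holds. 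Therefore Theorem \ref{thm:main} applies and $\mathsf{M}^n$ is optimal for the $(\xi_0, n, 0)$-bandit with this utility function, which is precisely the statement that $\mathsf{M}^n$ maximizes $P(S_n \ge k) = E_P[I_A]$ with $A = \{S_n \ge k\}$. Since $k$ and $n$ were arbitrary, this holds for all $k, n$.

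I would also include a brief sanity check that the discrete (Bernoulli) setting is legitimate here even though the main development assumed continuous densities $f_1, f_2$; this is exactly the content of the first Remark in Section \ref{sec:notations}, which states that all results carry over to the discrete case with the integrals over densities replaced by sums over the probability mass function, so no genuine difficulty arises. I do not anticipate a real obstacle in this corollary: the entire force of the argument has already been absorbed into Theorem \ref{thm:main} and Corollary \ref{cor:DeltaInc}. The only point requiring any care is making sure the correspondence between "number of wins $\ge k$" and the expected-utility formulation is stated cleanly — in particular that $Z_i^\theta \in \{0,1\}$ under the Bernoulli model so that $S_n$ is integer-valued and $I_{[k,+\infty)}(S_n) = I_{\{S_n \ge k\}}$ — but this is immediate and not a substantive difficulty.
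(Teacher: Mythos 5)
Your proposal is correct and follows essentially the same route as the paper: apply Theorem \ref{thm:main} with $\varphi(x)=I_{[k,+\infty)}(x)$ and initial fund $0$, verify Condition \eqref{conditionI} via the Bernoulli reduction $(\varphi(u+1)-\varphi(u))(\alpha-\beta)\ge 0$, and invoke the remark that the continuous-density development carries over to the discrete case.
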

\begin{proof}
	Note that the proof of Theorem \ref{thm:main} also holds in the case of the distributions being Bernoulli distributions. In this case, we only need to modify the calculation of expectations.
	
	For any fixed $k$, let the utility function be $\varphi(x)=I_{[k,+\infty)}(x)$. By Remark 2, this $\varphi$ satisfies condition \eqref{conditionI}, so Theorem \ref{thm:main} can be applied, and hence $\mathsf{M}^n$ maximizes the expectation $P(S_n\ge k)=E_P[I_A]=E_{\xi_0}[\varphi(S_n)]$.
\end{proof}

\section*{Acknowledgements}
\noindent We sincerely thank Professor Jordan Stoyanov for his valuable suggestions and amendments to this article.
\section*{Funding information}
\noindent Chen gratefully acknowledges the support of the National Key R\&D Program of China (grant No. 2018YFA0703900), Shandong Provincial Natural Science Foundation, China (grant No. ZR2019ZD41) and Taishan Scholars Project.


\end{document}